\newtheorem{theorem}{Theorem}[section]
\newtheorem{proposition}{Proposition}[section]
\newtheorem{definition}{Definition}[section]
\DeclareMathOperator{\End}{End}
\DeclareMathOperator{\Id}{Id}
\DeclareMathOperator{\Ind}{Ind}
\begin{document}

\title{Conformal covariance for the powers of the Dirac operator }

\author{Jean-Louis Clerc and Bent \O rsted}

\date{ }
\maketitle

\begin{abstract}
A new proof of the conformal covariance of the powers of the flat Dirac operator is obtained. The proof uses their relation with the Knapp-Stein intertwining operators for the spinorial principal series.
We also treat the compact picture, i.e. the corresponding operators on the sphere, where certain
polynomials of the Dirac operator appear. This gives a new representation-theoretic framework for earlier
results in \cite{bo, es, lr}.  
\footnote{2000 Mathematics Subject Classification : 22E45, 43A80}
\end{abstract}
\section{Introduction}

The powers of the (flat) Dirac operator are known to satisfy a covariance property with respect to the  M\"obius group (see \cite{b,pq, es}). We give a new proof by interpreting the powers of the Dirac operator as \emph{residues} of a meromorphic family of Knapp-Stein intertwining operators. The proof is elementary and does not involve any Clifford analysis.
We also give in the last section the corresponding meromorphic family and its residues on the sphere. This
corresponds to the so-called \emph{compact picture} of the induced representations.
In particular we find as residues on the sphere the polynomials $D(D^2 - 1^2) \cdots (D^2 - m^2)$ of the Dirac
operator $D$. These were found earlier by other methods in \cite{es, lr} (in these references the polynomials
are not with constant coefficients, and it still needs some consideration to see that they may be identified with
the polynomials above).  

\section{The conformal group of the sphere}

Let $\big(E^{n+1}, (\,.\,,\,.\,)\big)$ be a Euclidean vector space of dimension $n+1$, and denote by $(x,y)$ the inner product of two vectors $x$ and $y$. Let $S=S^n$ be the unit sphere of $E$. The \emph{Lorentz space} is
 \[ E^{1,n+1} =\mathbb R \oplus E= \{(t, x), t\in \mathbb R, x\in E^{n+1}\}\] 
 endowed with the symmetric bilinear form $[\,.\,,\,.\,]$ given by
 \[ [(t,x), (u,y)] =tu -(x,y)\ .\]
 
Let $\mathcal S$ be the set of isotropic lines in $E^{1,n+1}$. The map
\[ S\ni x\longmapsto d_x = \mathbb R(1,x) \in \mathcal S\]
is a one-to-one correspondance of $S$ onto $\mathcal S$, which is moreover a diffeomorphism for the canonical differential structures on $S$ (as a submanifold of $E^{n+1}$) and $\mathcal S$ (as a submanifold of the projective space of $E^{1,n+1}$).
 
 Let $G= SO_0(E^{1,n+1})\simeq SO_0(1,n+1)$ be the connected component of the neutral element in the group of isometries of $E^{1,n+1}$. Then $G$ acts on $\mathcal S$ and hence on $S$. This action is \emph{conformal} in the sense that for any $g$ in $G$ and $x\in S$, the differential $Dg(x) : T_x\longrightarrow T_{g(x)}$ is a similarity of the tangent space $T_x$ of $S$ at  $x$, i.e.
 \[ Dg(x) = \kappa(g,x)\, r(g,x)\ ,\]
 where $r(g,x)$ is a positive isometry of $T_x$ into $T_{g(x)}$ and  $\kappa(g,x) >0$ is the \emph {conformal factor} of $g$ at $x$.
  
 The group $K=SO(E^{n+1})\simeq SO(n+1)$ is a maximal compact subgroup of $G$, associated to the standard Cartan involution $g\longmapsto \theta(g) = (g^t)^{-1}$. The group $K$ acts transitively on $S$.
 
 Let $(e_0,\dots, e_n)$ be an orthonormal basis of $E^{n+1}$, and choose $e_0$ as origin in $S$. Let $E^n= e_0^\perp$ be the hyperplane orthogonal  to $e_0$ in of $E^{n+1}$. The stabilizer of $e_0$ in $K$ is the subgroup $M\simeq SO(n)$, and this gives a realization of $S\simeq K/M$ as a \emph{compact Riemannian symmetric space}. On the other hand, let $P$ be the stabilizer of $e_0$ in $G$. Then $P$ is a parabolic subgroup of $G$, and $S\simeq G/P$ is a realization of $S$ as a \emph{flag manifold}.
 
 Let $e_{-1}=(1,0, \dots, 0)\in E^{1,n+1}$, so that $\{e_{-1},e_0,\dots, e_n\}$ is a basis of $E^{1,n+1}$. Introduce the subgroups of $G$ defined by
 
\[A = \left\{ a_s = \begin{pmatrix}\cosh s&\sinh s&0&\dots&0\\\sinh s& \cosh s&0&\dots&0\\0&0&1 & &\\ \vdots&\vdots&&\ddots&\\0&0&&&1\end{pmatrix},\quad s\in \mathbb R\right\}
\]
and
\[N= \left\{\ n_u =\begin{pmatrix}1+\frac{\vert u\vert^2}{2}&-\frac{\vert u\vert^2}{2}&&u^t&\\\frac{\vert u\vert^ 2}{2}&1-\frac{\vert u\vert^2}{2}&&u^t&\\
&&1&&
\\u&-u&&\ddots&\\&&&&1
\end{pmatrix},\quad u\in \mathbb R^n\simeq E^n\right\}\ .
\]
 Then $P=MAN$ is a Langlands decomposition of $P$. Let $\overline N = \theta N$ be the opposite unipotent subgroup, given by
  \[ \overline N =  \left\{\ \overline n_v =\begin{pmatrix}1+\frac{\vert v\vert^2}{2}&\frac{\vert v\vert^2}{2}&&v^t&\\-\frac{\vert v \vert^ 2}{2}&1-\frac{\vert v\vert^2}{2}&&-v^t&\\
&&1&&
\\v&v&&\ddots&\\&&&&1
\end{pmatrix},\quad v\in \mathbb R^n\right\}\ .
 \]
 
 We also let \[w=\begin{pmatrix} 1& & & & &\\& -1& & & & &\\ & &-1&& & &\\ & & &1& & &\\ & & & & \ddots &\\ & & & & &  1\end{pmatrix}\ .\]
 The element $w$ is in $K$, acts on $S^n$ by $(x_0,x_1,x_2,\dots, x_n) \longmapsto (-x_0,-x_1,x_2,\dots, x_n)$, satisfies $wa_sw^{-1}=a_{-s}$, thus realizes the non trivial Weyl group element. 
  The map 
 \[c : v \longmapsto \Big(\frac{\vert v\vert^2-1}{\vert v \vert^2+1},\  \frac{2}{\vert v\vert^2+1}\,v  \Big)
 \] is a diffeomorphism from $E^n$ onto  $S\setminus\{e_0\} $. Its inverse is  the classical  \emph{stereographic projection} from the source ${e_0}$. 
 
 As a convention let $\overline E^n = E^n\cup \infty$ be the one-point compactification of $E_n$. Then clearly the map $c$ can be extended to $\overline E_n$  by setting $c(\infty) = e_0$, to get a one-to-one correspondance between $\overline E^n$ and $S^n$. This allows to transfer the action of $G$ on $S$ to an action of $G$ on $\overline E^n$. In this way, the group $G$ is realized as a group of rational conformal transformations of  $E^n$, usually called the \emph{M\"obius group} $M_+(\overline E_n)$. The subgroup $P$ is now realized by affine similarities, the group $M$ is the group of rotations of $E$ with center at $0$, $A$ is the group of positive dilations with center $0$ and $N$ is the group of translations of $E$. The element $w$ acts as the \emph{twisted inversion}
 \[(x_1,x_2,\dots, x_n) \longmapsto \Big(-\frac{x_1}{\vert x\vert^2}, \frac{x_2}{\vert x\vert^2}, \dots, \frac{x_n}{\vert x\vert^2}\Big)\ .
 \] 
  When using this model for the sphere, we refer to the \emph{noncompact picture}.
  
 \section{The Vahlen-Maass-Ahlfors realization of the twofold covering of the M\"obius group}

There is a quite useful realization of (a twofold covering of) $G$ acting on $\overline E_n$ via the Clifford algebra $Cl_{n-1}$, initiated by Vahlen (\cite{v}), revived by Maass (\cite{m}) and well presented by Ahlfors (\cite{a}, see also \cite{gm}, \cite{r1}).

Let $E^{n-1}$ be the vector subspace generated by $e_2,\dots, e_n$, and form the  \emph{Clifford algebra} $Cl_{n-1} = Cl(E^{n-1})$, i.e. the algebra (with unit $1$) generated by the vector space $E^{n-1}$ and the relations
\[uv+vu +2(u,v) = 0\ .
\]
The space $E^n$ is identified with the subspace $\mathbb R \oplus E^{n-1}$ of $Cl(E^{n-1})$, the element $e_1$ corresponding to the unit of the Clifford algebra. Following Ahlfors, elements of $E^n$ will be called \emph{vectors}.

Recall the three involutions of the Clifford algebra : the \emph{principal automorphism} $a\mapsto a'$, obtained by sending $e_j$ to $-e_j$ for $2\leq j\leq n$, the \emph{reversion} $a\mapsto a^*$ which is the antiisomorphism mapping $e_j$ to $e_j$ for $2\leq j \leq n$ and the \emph{conjugation} $a\mapsto \overline{a}$ which is the composition of the two first. 

There exists a canonical inner product on $Cl_{n-1}$ extending the inner product on $E^{n-1}$, the corresponding norm is denoted by $\vert \ \vert$.

For vectors $x=x^*$, $x'=\overline x$, and $x\overline x = x_1^2+\dots + x_n^2=\vert x\vert ^2$, so that any vector $x\neq 0$ is invertible with inverse equal to $x^{-1} = \frac{\overline x}{\vert x\vert^2}$. 

The \emph{Clifford group} $\Gamma_n$ is the set of all elements of the Clifford algebra $Cl_{n-1}$ which can be written as products on non-zero vectors.
If $a, b $ are in  $\Gamma_n$, then \[ a\overline a = \vert a\vert^2\ ,\quad \vert ab\vert = \vert a\vert \vert b\vert \]

Let $a$ be in $\Gamma_n$. Then $a$ is invertible and  $a^{-1} = \frac{\overline a}{\vert a\vert^2}$. For $x$ in $E^n$, $ax{a'}^{-1}\in E^n$, and the map $x\mapsto ax{a'}^{-1}$ is a positive isometry of $E^n$. 

\begin{definition}\label{Cmatrix}
 A matrix $g=\begin{pmatrix} a&b\\ c&d\end{pmatrix}$ is a \emph{Clifford matrix} if

$i) \ a,b,c,d \in \Gamma_n \cup \{0\}$

$ii)\ ad^* - bc^* = 1$

$iii)\ ab^* \text{ and } cd^* \in E^n$

\end{definition}

\begin{theorem} Under matrix multiplication, the Clifford matrices form a group, denoted by $SL_2(\Gamma_n)$. 

\end{theorem}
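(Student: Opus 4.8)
The statement to prove is that the Clifford matrices form a group under matrix multiplication. The plan is to verify the group axioms directly, with the only real content being closure and existence of inverses; associativity is inherited from matrix multiplication over the ring $Cl_{n-1}$ (one must be mildly careful since $Cl_{n-1}$ is noncommutative, but $2\times 2$ matrix multiplication over any associative unital ring is associative), and the identity matrix $\begin{pmatrix} 1&0\\0&1\end{pmatrix}$ is evidently a Clifford matrix.

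For the inverse, I would first observe that if $g=\begin{pmatrix} a&b\\c&d\end{pmatrix}$ is a Clifford matrix then its \emph{would-be} inverse is $g^{-1}=\begin{pmatrix} d^*&-b^*\\-c^*&a^*\end{pmatrix}$. One checks this is a genuine two-sided inverse: the product $g\,g^{-1}$ has diagonal entries $ad^*-bc^*=1$ and $da^*-cb^*$, and off-diagonal entries $-ab^*+ba^*$ and $-cd^*+dc^*$. Now conditions (iii) say $ab^*,cd^*\in E^n$; since every vector $x\in E^n$ satisfies $x^*=x$, we get $ba^*=(ab^*)^*=ab^*$ and similarly $dc^*=cd^*$, so the off-diagonal entries vanish; and one shows $da^*-cb^*=1$ by applying the conjugation/reversion antiautomorphism to $ad^*-bc^*=1$ together with the identity $(\,\cdot\,)^*{}^*=\id$ and the fact that for Clifford-group elements $a^{**}=a$. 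The product $g^{-1}g$ is handled the same way. Finally one must verify that $g^{-1}$ itself satisfies (i)--(iii): (i) holds because $\Gamma_n\cup\{0\}$ is stable under $*$ (the reversion is a product of reversions of vectors, hence maps products of vectors to products of vectors), (ii) is the identity $d^*a^{**}-b^*c^{**}=d^*a-b^*c$, which equals $1$ by the computation just made (it is the $(1,1)$-entry of $g^{-1}g$), and (iii) requires $d^*(-c^*)^*=-d^*c\in E^n$ and $(-b^*)a^{**}\cdot$-type expressions to lie in $E^n$; here one uses $d^*c^{**}=d^*c=(cd^*)^*=cd^*\in E^n$ after the appropriate massaging — the point is that reversing a vector gives a vector.

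The genuinely laborious step, and the main obstacle, is \textbf{closure}: given two Clifford matrices $g_1=\begin{pmatrix}a_1&b_1\\c_1&d_1\end{pmatrix}$ and $g_2=\begin{pmatrix}a_2&b_2\\c_2&d_2\end{pmatrix}$, one must show the product $\begin{pmatrix}a_1a_2+b_1c_2 & a_1b_2+b_1d_2\\ c_1a_2+d_1c_2 & c_1b_2+d_1d_2\end{pmatrix}$ again satisfies (i)--(iii). Condition (ii) for the product is a formal manipulation using $(xy)^*=y^*x^*$, $a_id_i^*-b_ic_i^*=1$, and the relations forced by (iii). The real work is (i), that each entry lies in $\Gamma_n\cup\{0\}$: a sum of two products of vectors need not a priori be such a product, so one argues that, say, $a_1a_2+b_1c_2 = a_1(a_2 + a_1^{-1}b_1c_2)$ (when $a_1\neq0$) and then that $a_1^{-1}b_1$ and the resulting combination force the sum into $\Gamma_n\cup\{0\}$ using that $a^{-1}b\in E^n\cup\{\infty\text{-type}\}$ consequences of (ii)--(iii) — concretely, from $ad^*-bc^*=1$ and $ab^*\in E^n$ one derives that $a^{-1}b$ (when $a$ invertible) is a vector, reducing the closure check to statements about sums of the form vector-times-$\Gamma_n$-element; and (iii) for the product is then checked by the same substitution together with Ahlfors' lemma that for $a,b\in\Gamma_n$ with $ab^*\in E^n$ one has good control of $a^{-1}b$. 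I would organize this by first proving the auxiliary fact: \emph{if $a\in\Gamma_n\cup\{0\}$, $b\in\Gamma_n\cup\{0\}$ and $ab^*\in E^n$, then $\bar a b\in E^n$ as well}, and then reduce all of (i) and (iii) for the product to bookkeeping with this fact and the norm multiplicativity $|ab|=|a||b|$. The degenerate cases where some entry is $0$ must be treated separately but are easy.
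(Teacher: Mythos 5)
The paper itself does not prove this theorem: it simply states it and refers the reader to Ahlfors \cite{a} (the remark ``See \cite{a} for a proof'' covers both this theorem and the next). So there is no ``paper's proof'' to compare with; what follows is an assessment of your sketch on its own merits.

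Your organization is the right one: associativity and the identity are free, and the real content lies in (a) verifying that the natural candidate inverse $h = \begin{pmatrix} d^*&-b^*\\-c^*&a^*\end{pmatrix}$ is again a Clifford matrix, and (b) closure under multiplication. You have also correctly identified the key auxiliary lemma (the equivalence $ab^*\in E^n \Leftrightarrow a^{-1}b\in E^n$ for $a,b\in\Gamma_n$, which the paper states immediately after the theorem), and you correctly see that the delicate point in closure is that a sum of two $\Gamma_n$-elements is not \emph{a priori} in $\Gamma_n\cup\{0\}$.

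There are, however, two genuine gaps. First, the claim that ``$g^{-1}g$ is handled the same way'' as $gg^{-1}$ is not right. The $(1,1)$-entry of $hg$ is $d^*a-b^*c$; unlike $ad^*-bc^*=1$ and its reverse $da^*-cb^*=1$, this quantity is not immediately equal to $1$ from conditions (ii)--(iii), and you then use it circularly when checking condition (ii) for $h$ (you say it ``equals $1$ by the computation just made,'' but the computation just made was $gh=I$, not $hg=I$). The clean repair is to reorder the argument: prove closure first (so the Clifford matrices form a monoid), then show $h$ satisfies (i)--(iii) and $gh=I$; in any monoid in which every element has a right inverse, right inverses are automatically two-sided, so $hg=I$ comes for free without any further entry computation. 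Second, closure itself --- which you rightly call ``the genuinely laborious step'' --- is only gestured at. The factorization $a_1a_2+b_1c_2 = a_1(a_2 + a_1^{-1}b_1c_2) = a_1a_2(1 + a_2^{-1}a_1^{-1}b_1c_2)$ does reduce the problem to showing $a_2^{-1}a_1^{-1}b_1c_2$ is a vector (so that $1+{}$vector is a vector, hence the product lies in $\Gamma_n\cup\{0\}$), but you do not carry this out, and the case analysis when one of $a_1, a_2, b_1, c_2$ vanishes is precisely where Ahlfors' argument needs care. As a proof, the sketch is therefore incomplete exactly at the step you yourself flag as the hard one; as an outline of what must be done, and with the monoid repair above, it is sound.
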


The following observation will be useful later : for $a, b\in \Gamma_n$, the conditions $ab^*\in E^n$ and $a^{-1}b \in E^n$ are equivalent.

Let $g=\begin{pmatrix} a&b\\ c&d\end{pmatrix}$ in $SL_2(\Gamma_n)$ and $x$ in $\overline E^n$. We recall the meaning of the expression $g(x) = (ax+b)(cx+d)^{-1}$.

First $cx+d$ is either $0$ or is invertible :  

$\bullet$ if $c\neq 0$, $c$ is in $\Gamma_n$. Condition $iii)$ and the remark imply that $c^{-1}d\in E^n$, so that $cx+d = c(x+c^{-1}d)$ is either $0$ or invertible.

$\bullet$ if $c=0$, then $ii)$ implies that $d\neq 0$ so that $cx+d=d$ is in $\Gamma_n$, hence is invertible.

Next, observe that $ax+b$ and $cx+d$ cannot be both $0$. In fact assume the contrary. By $ii)$, $a$ and $c$ can not be both $0$. So, assume that $a\neq 0$. Hence $a$ is invertible and $x=-a^{-1} b= b^* {a^*}^{-1}$. Hence \[0 = cx+d = -c b^* (a^*)^{-1} +d\ .\]
Multiplying both sides on the left by $a^*$ yields $-c^*b+d a^*= 0$ which is incompatible with $ii)$. A similar argument holds if $c\neq 0$.

So, if $cx+d\neq 0$, $(ax+b)(cx+d)^{-1}$ is a well defined element of $Cl_{n-1}$. If $cx+d=0$, then set $(ax+b)(cx+d)^{-1} =\infty$. Finally, let $(a\infty +b)(c\infty +d)^{-1} = ac^{-1}$ is $c\neq 0$ and $=\infty$ if $c=0$. 

\begin{theorem} ${ }$

\medskip

$i)$ For any $g$ in $SL_2(\Gamma_n)$ and $x\in \overline E^n$, $g(x) = (ax+b)(cx+d)^{-1}$ belongs to $\overline E^n$ 

\medskip

$ii)$ The map
$\iota_g : \overline E^n \ni x\mapsto g(x) \in \overline E^n$ belongs to the M\"obius group $M(\overline E^n)$

\medskip

$iii)$ The homomorphism $g\mapsto \iota_g$ is a twofold covering of the M\"obius group, with kernel $\pm\Id_2$.
\end{theorem}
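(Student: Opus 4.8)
The plan is to deduce all three parts from a generation statement for $SL_2(\Gamma_n)$, following the classical route. Put $\overline{Cl}=Cl_{n-1}\cup\{\infty\}$.

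\emph{Homomorphism and generators.} The first point is the formal identity
\[(g_1g_2)(x)=g_1\big(g_2(x)\big)\qquad (x\in\overline{Cl}),\]
proved just as for $SL_2(\mathbb R)$ but keeping the order of the Clifford factors: writing out the entries of $g_1g_2$ and setting $y=g_2(x)$, one uses $a_2x+b_2=y(c_2x+d_2)$, valid whenever $c_2x+d_2$ is invertible, to factor the numerator and the denominator of $(g_1g_2)(x)$ through $c_2x+d_2$; the finitely many degenerate configurations are matched against the conventions fixed just before the statement. The second, and main, point is: $SL_2(\Gamma_n)$ is generated by the translations $t_b=\left(\begin{smallmatrix}1&b\\0&1\end{smallmatrix}\right)$, $b\in E^n$, the elements $m_a=\left(\begin{smallmatrix}a&0\\0&(a^*)^{-1}\end{smallmatrix}\right)$, $a\in\Gamma_n$, and $w_0=\left(\begin{smallmatrix}0&-1\\1&0\end{smallmatrix}\right)$. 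If $c=0$, then $ii)$ gives $d=(a^*)^{-1}$, the equivalence $ab^*\in E^n\Leftrightarrow a^{-1}b\in E^n$ gives $a^{-1}b\in E^n$, and $g=m_a\,t_{a^{-1}b}$. If $c\neq0$ and $d\neq0$, then $c,d\in\Gamma_n$ and $d^{-1}c\in E^n$ (again by $iii)$ and that equivalence), so right multiplication by the lower unipotent matrix $\left(\begin{smallmatrix}1&0\\-d^{-1}c&1\end{smallmatrix}\right)=w_0\,t_{d^{-1}c}\,w_0^{-1}$ clears the lower-left entry and reduces $g$ to the previous case; if $c\neq0$ and $d=0$, right multiplication by $w_0$ achieves the same. (That all intermediate matrices are again Clifford matrices is immediate since these form a group, and that an entry which cannot vanish lies in $\Gamma_n$ follows from $ii)$.) \emph{This Gaussian elimination, redone in the noncommutative ring $Cl_{n-1}$ with the Clifford-matrix conditions kept alive at each step, is where the real work lies.}

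\emph{Parts $i)$ and $ii)$.} A one-line evaluation gives $\iota_{t_b}(x)=x+b$ (a translation), $\iota_{m_a}(x)=a\,x\,a^{*}$ — which equals $|a|^2$ times the positive isometry $x\mapsto ax(a')^{-1}$ of $E^n$ recalled in Section 3, hence is a positive similarity — and $\iota_{w_0}(x)=-x^{-1}=-\overline x/|x|^2$, the twisted inversion. In particular each generator is a bijection of $\overline{E}^n$ onto itself belonging to $M(\overline{E}^n)$. By the composition identity above, every $\iota_g$ is a finite composition of such maps; this proves at once that $\iota_g$ takes values in $\overline{E}^n$ (part $i)$), that $\iota_g\in M(\overline{E}^n)$ (part $ii)$), and that $g\mapsto\iota_g$ is a homomorphism $SL_2(\Gamma_n)\to M(\overline{E}^n)$.

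\emph{Part $iii)$.} The homomorphism is surjective because $M(\overline{E}^n)$ is itself generated by translations, positive similarities and the inversion, all in the image. For the kernel, suppose $\iota_g=\id$; evaluating successively at $x=\infty$, $x=0$ and $x=1$ forces $c=0$, then $b=0$, then $a=d$, and then $axa^{-1}=x$ for every $x\in E^n$, so $a$ lies in the centre of $Cl_{n-1}$; a central element of $\Gamma_n$ with $aa^{*}=1$ must be $\pm1$, whence $g=\pm\Id_2$. Finally $SL_2(\Gamma_n)$ is a Lie group (a closed subgroup of $GL_2(Cl_{n-1})$) and is connected, since every generator is joined to $\Id_2$ inside it — for $w_0$ use the curve $\left(\begin{smallmatrix}\cos s&-\sin s\\ \sin s&\cos s\end{smallmatrix}\right)$, $0\le s\le\pi/2$, of Clifford matrices. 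A surjective homomorphism of Lie groups with discrete kernel of order two is a twofold covering map, which is exactly $iii)$.
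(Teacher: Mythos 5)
The paper does not prove this theorem at all: the statement is followed only by ``See \cite{a} for a proof,'' so there is no in-house argument to compare against. Your approach --- reduce to the generators $t_b$, $m_a$, $w_0$ by noncommutative Gaussian elimination, check that each generator acts by a standard M\"obius map, and read off all three parts from the composition rule --- is the classical route and is essentially what Ahlfors does. The main steps are sound: $(g_1g_2)(x)=g_1\big(g_2(x)\big)$ by factoring numerator and denominator through $c_2x+d_2$; the case distinction $c=0$, $c\neq0\neq d$, $c\neq0=d$ using the paper's remark that $ab^*\in E^n\Leftrightarrow a^{-1}b\in E^n$ (together with $(cd^*)^*=dc^*$ to pass from $cd^*\in E^n$ to $dc^*\in E^n$); and the correct identification $\iota_{m_a}(x)=axa^*=|a|^2\,ax(a')^{-1}$ and $\iota_{w_0}=$ twisted inversion.

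Three spots deserve one more sentence. In the kernel computation, ``a central element of $\Gamma_n$ with $aa^*=1$ must be $\pm1$'' is true but not immediate when $n-1$ is odd, since the centre of $Cl_{n-1}$ is then $\mathbb R\oplus\mathbb R\,\omega$ with $\omega=e_2\cdots e_n$; you need to also use that any $a\in\Gamma_n$ satisfies $a\overline a=|a|^2>0$ as a \emph{scalar}, which together with $aa^*=1$ forces the $\omega$-component to vanish. For surjectivity, you should say explicitly that $M(\overline E^n)$ here is the connected (sense-preserving) M\"obius group $M_+(\overline E^n)=$ image of $SO_0(1,n+1)$ introduced in Section 2: the image of $SL_2(\Gamma_n)$ is connected, and $M_+$ is exactly what is generated by affine positive similarities together with the twisted inversion (the paper's Bruhat decomposition $G=P\cup PwP$). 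Finally, connecting $m_a$ to $\Id_2$ requires $\Gamma_n$ itself to be path-connected; this holds because $E^n\setminus\{0\}$ is path-connected for $n\geq 2$, and is worth stating since the covering-space conclusion in $iii)$ hinges on it.
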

See \cite{a} for a proof.

In the sequel , we let $\widetilde G = SL_2(\Gamma_n)$. The stabilizer of $\infty$ in $\widetilde G$ is the subgroup \[\widetilde P=\Bigg\{ \begin{pmatrix} a&b\\0&d\end{pmatrix},\quad ad^* =1,\ ab^*\in E^n\Bigg\}\ .\]
The Langlands decomposition of $\widetilde P$ is $\widetilde P = \widetilde L N = \widetilde M AN$, where 

\[ N= \Bigg\{ \begin{pmatrix} 1&v\\0&1\end{pmatrix},\quad v\in E^n \Bigg\}\]

\[\widetilde L= \Bigg\{ \begin{pmatrix} a&0\\0&{a^*}^{-1}\end{pmatrix},\quad a\in \Gamma^n \Bigg\}\]
\[ A = \Bigg\{a_t= \begin{pmatrix} t&0\\0&t^{-1}\end{pmatrix},\quad  t>0\Bigg\}\]

and the group $\widetilde M$ is realized as
\[\widetilde M =\Bigg\{ \begin{pmatrix}m&0\\0&{m^*}^{-1}\end{pmatrix}, \quad m\in \Gamma_n, \vert m\vert =1 \Bigg\}\ .
\]

The element $\begin{pmatrix}m&0\\0&{m^*}^{-1}\end{pmatrix}$ of 
$\widetilde M$ acts on $E^n$ by $\sigma_m : x\longmapsto mxm^*$ and the map $\sigma : m\longmapsto \sigma_m$ is a twofold covering of $\widetilde M \simeq Spin_n$ onto $SO(n)$.

Let $H = \begin{pmatrix} 1&0\\0&-1\end{pmatrix}$, so that, for $t>0$ $a_t=\exp \log t H$. A complex linear form on $\mathfrak a$ is identified with the complex number $\lambda$ equal to the value of the linear from on the element $H$. The half-sum of the roots $\rho$ corresponds to the number $n$, and we let $a_t^\lambda= t^\lambda , t>0, \lambda\in \mathbb C$.

The group $N$ acts by translations $x\mapsto x+v$.

An element of $\overline N$ is of the form $\begin{pmatrix}1&0\\x&1 \end{pmatrix}$ where $x\in E^n$. We will identify $\overline N$ with $E_n$.

The non trivial Weyl group element  is realized by the matrix $ w= \begin{pmatrix} 0&-1\\ 1&0 \end{pmatrix}$.

The \emph{Bruhat decomposition} of an element $g=\begin{pmatrix} a&b\\c&d\end{pmatrix}$, where $a\neq 0$ is given by

\[\begin{pmatrix} a&b\\c&d\end{pmatrix} = \begin{pmatrix} 1&0\\ca^{-1}&1\end{pmatrix} \begin{pmatrix} a&0\\0&{a^*}^{-1}\end{pmatrix}\begin{pmatrix} 1&a^{-1}b\\0&1\end{pmatrix}\ .
\]
The proof of this identity reduces to \ $\ ca^{-1}b+{a^*}^{-1}= d\ $\  , which is a consequence of the assumptions on $a,b,c$ and $d$ (see Definition \ref{Cmatrix}).

\begin{proposition}
 Let $g\in \widetilde G$, $x,y \in E^n$ and assume that $g(x), g(y)\in E^n$. Then
\begin{equation}\label{globalcov}
g(x)-g(y) = {(cy+d)^*}^{-1}(x-y) (cx+d)^{-1}\ .
\end{equation}
\end{proposition}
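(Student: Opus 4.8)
The plan is to prove the equivalent identity
\[ (cy+d)^*\,\bigl(g(x)-g(y)\bigr)\,(cx+d) = x-y , \]
which is meaningful because, by the discussion preceding the statement, the hypotheses $g(x),g(y)\in E^n$ force $cx+d$ and $cy+d$ to be nonzero, hence invertible in $Cl_{n-1}$. The only algebraic input is the list of relations among $a,b,c,d$ that follow from $\widetilde G$ being a group: the matrix $\begin{pmatrix} d^*&-b^*\\-c^*&a^*\end{pmatrix}$ satisfies $g\begin{pmatrix} d^*&-b^*\\-c^*&a^*\end{pmatrix}=\Id_2$ — a one-line check from conditions $ii)$ and $iii)$ of Definition \ref{Cmatrix}, using that reversion fixes vectors and reverses products — so it is the two-sided inverse $g^{-1}$, and expanding $g^{-1}g=\Id_2$ gives
\[ d^*a-b^*c=1,\qquad a^*d-c^*b=1,\qquad a^*c=c^*a,\qquad b^*d=d^*b . \]

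First I would record the auxiliary identity $(cy+d)^*(ay+b)=(ay+b)^*(cy+d)$. Since $y$ is a vector, $(cy+d)^*=yc^*+d^*$ and $(ay+b)^*=ya^*+b^*$; multiplying out and regrouping, the difference of the two products becomes $y(c^*a-a^*c)y+y(c^*b-a^*d)+(d^*a-b^*c)y+(d^*b-b^*d)$, which the four relations above reduce to $0-y+y+0=0$. Consequently $(cy+d)^*g(y)=(cy+d)^*(ay+b)(cy+d)^{-1}=(ay+b)^*$, while $(cy+d)^*g(x)(cx+d)=(cy+d)^*(ax+b)$ since $g(x)(cx+d)=ax+b$. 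Subtracting,
\[ (cy+d)^*\,\bigl(g(x)-g(y)\bigr)\,(cx+d)=(cy+d)^*(ax+b)-(ay+b)^*(cx+d). \]

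The proof then finishes by expanding this as $(yc^*+d^*)(ax+b)-(ya^*+b^*)(cx+d)$ and regrouping: the terms $y(c^*a-a^*c)x$ and $d^*b-b^*d$ vanish, $y(c^*b-a^*d)=-y$, and $(d^*a-b^*c)x=x$, leaving exactly $x-y$. No separate treatment of the degenerate cases $c=0$ or $a=0$ is needed: the relations and expansions above hold as identities in $Cl_{n-1}$ with the convention $0^*=0$, and invertibility is used only in the reduction step and in the cancellation $(cx+d)^{-1}(cx+d)=1$. The only real obstacle is bookkeeping with the three involutions — remembering that reversion reverses the order of a product and is the identity on vectors, and tracking which of the several relations among $a,b,c,d$ is being invoked at each step; once the auxiliary identity is in place the rest is a one-line computation. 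A slightly longer alternative would be to check the formula on the generators $N$, $A$, $\widetilde M$, $w$ of $\widetilde G$, where it is immediate, and then propagate it across products via the identity expressing the lower row of $g_1g_2$ evaluated at $x$ as $\bigl(c_1g_2(x)+d_1\bigr)(c_2x+d_2)$; I would keep the direct computation above as the primary argument.
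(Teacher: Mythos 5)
Your proof is correct. The paper itself gives no argument for this proposition, simply citing Ahlfors and Gilbert--Murray, so your self-contained verification fills a genuine gap in the exposition. The mechanics are sound: the inverse formula $g^{-1}=\begin{pmatrix} d^*&-b^*\\-c^*&a^*\end{pmatrix}$ does follow from $ii)$, $iii)$, and the fact that reversion fixes vectors and reverses products; expanding $g^{-1}g=\Id_2$ yields the four relations $d^*a-b^*c=1$, $a^*d-c^*b=1$, $a^*c=c^*a$, $b^*d=d^*b$; and the regrouping of the quadratic expression in $y$ (and then in $x,y$) correctly collapses to $x-y$. You are also right that no case-splitting on $c=0$ or $a=0$ is needed, since invertibility of $cx+d$ and $cy+d$ is already forced by the hypothesis $g(x),g(y)\in E^n$ and the algebraic relations hold verbatim with $0^*=0$.

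One small remark: the auxiliary identity $(cy+d)^*(ay+b)=(ay+b)^*(cy+d)$ can be obtained even faster once $g(y)\in E^n$ is assumed. Writing $ay+b=g(y)(cy+d)$ and using that reversion fixes the vector $g(y)$ gives $(ay+b)^*=(cy+d)^*g(y)$, so both sides equal $(cy+d)^*g(y)(cy+d)$. Your purely algebraic derivation has the advantage of not using the hypothesis on $g(y)$ at this stage, and it is structurally parallel to the final regrouping, so either route is acceptable.
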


\begin{proposition}\label{localcov}
 Let $g$ be in $\widetilde G$, $x\in E^n$ and assume that $g(x)\in E^n$. Then the differential of the action of $g$ at $x$ is given by
\begin{equation} Dg(x) \xi= {(cx+d)^*}^{-1}\xi (cx+d)^{-1}\ .\end{equation}
The conformal factor of $g$ at $x$ is given by $\kappa(g,x) = \vert cx+d\vert^{-2}$ and the rotation factor is given by $\sigma((cx+d)' \vert cx+d\vert^{-1})$.
\end{proposition}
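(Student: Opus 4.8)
The strategy is to differentiate the global covariance formula \eqref{globalcov} from the previous proposition. Fix $g = \begin{pmatrix} a & b \\ c & d\end{pmatrix} \in \widetilde G$ and $x \in E^n$ with $g(x) \in E^n$. Since $g(x) \in E^n$, the quantity $cx+d$ is nonzero, hence invertible in $\Gamma_n$, and by continuity $g(y) \in E^n$ for all $y$ in a neighborhood of $x$. For a tangent vector $\xi \in T_x E^n \simeq E^n$, I would apply \eqref{globalcov} with $y = x + t\xi$, so that
\[
g(x+t\xi) - g(x) = {(c(x+t\xi)+d)^*}^{-1}\,(-t\xi)\,(cx+d)^{-1}\ .
\]
Dividing by $t$ and letting $t \to 0$, the factor $(c(x+t\xi)+d)^*$ tends to $(cx+d)^*$ (matrix entries are fixed, only $x+t\xi$ varies, and inversion is continuous on $\Gamma_n \setminus\{0\}$), so in the limit
\[
Dg(x)\xi = {(cx+d)^*}^{-1}\,\xi\,(cx+d)^{-1}\ ,
\]
which is the first assertion. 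Here one should note the sign: $g(x+t\xi)-g(x)$ corresponds to $x-y = -t\xi$ in \eqref{globalcov}, and the two minus signs cancel.

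For the conformal factor and rotation, I would invoke the fact recalled just before Definition \ref{Cmatrix}: for $a \in \Gamma_n$ the map $\xi \mapsto a\xi{a'}^{-1}$ is a positive isometry of $E^n$, and more basically $|a\xi| = |a||\xi|$ for $\xi \in E^n$ (a consequence of $|ab| = |a||b|$, together with the fact that vectors lie in $\Gamma_n \cup\{0\}$). Writing $q = cx+d \in \Gamma_n$, I want to rewrite $\xi \mapsto {q^*}^{-1}\xi q^{-1}$ in the normalized form $\kappa\cdot r$ with $r$ a positive isometry. Using $|{q^*}^{-1}\xi q^{-1}| = |q^*|^{-1}|\xi||q^{-1}| = |q|^{-2}|\xi|$ (since $|q^*| = |q|$ and $|q^{-1}| = |q|^{-1}$, as $q\overline q = |q|^2$), one reads off $\kappa(g,x) = |cx+d|^{-2}$. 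To extract the rotation, I would compare ${q^*}^{-1}\xi q^{-1}$ with the standard twisted action $\xi \mapsto m\xi{m'}^{-1}$ of $\Gamma_n$: a short manipulation using $q^* = \overline{q'}$ and $q^{-1} = \overline q/|q|^2$ shows ${q^*}^{-1}\xi q^{-1} = |q|^{-2}\,(q')\,\xi\,(\overline{q'})^{-1}\cdot(\text{scalar adjustment})$, and after dividing by the scalar $|q|^{-2}$ one is left with the positive isometry associated to the normalized element $(cx+d)'|cx+d|^{-1}$, which is exactly the claimed rotation factor.

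The main obstacle is bookkeeping with the three Clifford involutions: one must be careful that the reversion $*$, not the conjugation or the principal automorphism, appears in $(cx+d)^*$, and then correctly convert ${q^*}^{-1}\xi q^{-1}$ into the form $q'\xi {q'}^{-1}$ (up to a positive scalar) that matches the isometry recalled before Definition \ref{Cmatrix}. Concretely this uses $\overline{q} = (q^*)' = (q')^*$ and $q^{-1} = \overline q/|q|^2$ repeatedly, and the observation that for a \emph{vector} $\xi$ one has $\xi^* = \xi$, so reversion does nothing on $\xi$ itself. None of these steps is deep, but the order of the factors and the distribution of the norm are where an error would creep in; everything else is a direct limit computation from \eqref{globalcov}.
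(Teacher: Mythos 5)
Your proof follows essentially the same route as the paper: differentiate the global covariance identity \eqref{globalcov} to get $Dg(x)\xi={(cx+d)^*}^{-1}\xi(cx+d)^{-1}$, then factor $\vert cx+d\vert^{-2}$ out of this conjugation and recognize the remaining unit-norm map as the rotation $\sigma\big((cx+d)'/\vert cx+d\vert\big)$. Two small bookkeeping points: with $y=x+t\xi$, \eqref{globalcov} literally gives $g(x)-g(x+t\xi)={(c(x+t\xi)+d)^*}^{-1}(-t\xi)(cx+d)^{-1}$, so your displayed equality has a stray sign (which, as you observe, cancels in the end); and the ``scalar adjustment'' step should land on ${q^*}^{-1}\xi\,q^{-1}=\vert q\vert^{-2}\,\dfrac{q'}{\vert q\vert}\,\xi\,\Big(\dfrac{q'}{\vert q\vert}\Big)^{*}$, which is the form $m\xi m^*$ with $m=q'/\vert q\vert$ that matches the definition of $\sigma_m$.
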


Proposition \ref{globalcov} is proved in \cite{a} (see also \cite{gm}). The formula for the differential in Proposition \ref{localcov} is a consequence. For the last part of the proposition, observe that $a=(cx+d)$ is in $\Gamma_n$  and
\[{a^*}^{-1} \xi a^{-1} = \vert a\vert^{-2} \Big(\frac{a}{\vert a\vert}\Big)^{*-1}\,\xi\, \Big(\frac{a}{\vert a\vert}\Big)^{-1}= \vert a\vert^{-2}\,\frac{a'}{\vert a\vert}\,\xi\, \Big(\frac{a'}{\vert a\vert}\Big)^*\ .
\]
Another formula will be useful later.
\begin{proposition} Let $g=\begin{pmatrix} a&b\\c&d\end{pmatrix}$ be in $SL_2(\Gamma_n)$. Let $x\in E^n$, assume that $g$ is defined at $x$ and let $y=(ax+b)(cx+d)^{-1}$. Then
\begin {equation}\label{invcov}
(-c^*y+a^*) = (cx+d)^{-1}
\end{equation}

\end{proposition}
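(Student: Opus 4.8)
The plan is to reduce the claimed identity \eqref{invcov} to the assertion that $(-c^{*}y+a^{*})(cx+d)=1$, and then to establish this by a short algebraic manipulation. Since $g$ is defined at $x$, the element $q:=cx+d$ is non-zero, hence invertible as noted above, and by the very definition of $y$ we have $yq=ax+b$. Consequently
\[
(-c^{*}y+a^{*})\,q=-c^{*}(yq)+a^{*}q=-c^{*}(ax+b)+a^{*}(cx+d)=(a^{*}c-c^{*}a)\,x+(a^{*}d-c^{*}b).
\]
Thus, once the two ``dual'' relations $a^{*}c=c^{*}a$ and $a^{*}d-c^{*}b=1$ are available, the right-hand side collapses to $1$, and multiplying on the right by $q^{-1}$ yields $-c^{*}y+a^{*}=q^{-1}=(cx+d)^{-1}$, which is \eqref{invcov}.

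It remains to prove those two relations, which are not literally among conditions $i)$--$iii)$ of Definition~\ref{Cmatrix} but are forced by the group law. First I would check directly that $h:=\begin{pmatrix} d^{*}&-b^{*}\\ -c^{*}&a^{*}\end{pmatrix}$ is a right inverse of $g$: the $(1,1)$-entry of $gh$ is $ad^{*}-bc^{*}=1$ by $ii)$; the $(1,2)$-entry $-ab^{*}+ba^{*}$ vanishes because $ab^{*}\in E^{n}$ is a vector, so $ba^{*}=(ab^{*})^{*}=ab^{*}$; likewise $cd^{*}-dc^{*}=0$ because $cd^{*}\in E^{n}$; and applying the reversion to $ad^{*}-bc^{*}=1$ gives $da^{*}-cb^{*}=1$ for the $(2,2)$-entry. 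Hence $gh=\Id_{2}$. Since $SL_{2}(\Gamma_{n})$ is a group, a one-sided inverse is two-sided, so $hg=\Id_{2}$ as well; comparing the bottom row of $hg$ with $\Id_{2}$ yields exactly $-c^{*}a+a^{*}c=0$ and $-c^{*}b+a^{*}d=1$, i.e. the required relations.

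I do not expect a genuine obstacle here. The only points demanding care are the bookkeeping with the reversion $*$ (one must use repeatedly that it is an anti-automorphism fixing vectors) and the mild observation that ``$h$ is a right inverse'' upgrades to ``$h=g^{-1}$'' precisely because the Clifford matrices form a group. Alternatively, one may simply quote the formula $g^{-1}=\begin{pmatrix} d^{*}&-b^{*}\\ -c^{*}&a^{*}\end{pmatrix}$ from \cite{a} and extract the two needed identities from $g^{-1}g=\Id_{2}$ directly.
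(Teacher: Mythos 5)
Your proof is correct and follows essentially the same route as the paper: multiply $-c^{*}y+a^{*}$ on the right by $cx+d$, use $y(cx+d)=ax+b$, and reduce to the two ``dual'' identities $a^{*}c=c^{*}a$ and $a^{*}d-c^{*}b=1$. Where you are more careful than the paper is worth noting. The paper's chain
\[(-c^*y+a^*)(cx+d)= -c^*(ax+b) +a^*(cx+d) = -c^*b+a^*d\]
silently discards the term $(a^{*}c-c^{*}a)x$ in the middle equality, and then only cites $a^{*}d-c^{*}b=1$ as the needed consequence of Definition~\ref{Cmatrix}; you explicitly isolate both required relations and prove them by verifying that $h=\begin{pmatrix} d^{*}&-b^{*}\\ -c^{*}&a^{*}\end{pmatrix}$ is a (hence two-sided) inverse of $g$, using the conditions $ad^{*}-bc^{*}=1$, $ab^{*}\in E^{n}$, $cd^{*}\in E^{n}$ and the anti-automorphism property of $*$. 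That computation is correct and closes a small gap the paper leaves to the reader. There is no substantive difference in method — both proofs hinge on the ``dual'' form of the $SL_{2}(\Gamma_{n})$ relations coming from $g^{-1}g=\Id_{2}$ — but your write-up makes the dependence on $a^{*}c=c^{*}a$ explicit rather than tacit.
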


\begin{proof} The identity follows from
\[ (-c^*y+a^*)(cx+d)= -c^*(ax+b) +a^*(cx+d) = -c^*b+a^*d\ ,\]
and the fact that $a^*d -c^*b=1$, a consequence of the conditions satisfied by $a,b,c,d$ (see Definition\ref{Cmatrix}).
\end{proof}
The map $\widetilde \theta$ defined on $\widetilde G$ by
\[\widetilde \theta \ \begin{pmatrix} a&b\\c&d\end{pmatrix} = \begin{pmatrix} d'&-c'\\-b'&a'\end{pmatrix}
\]
is an involution of $\widetilde G$, which covers the standard involution $\theta$ on $G$. The fixed points set of $\widetilde \theta$ is the subgroup $\widetilde K$  given by
\[\widetilde K = \Bigg\{ \begin{pmatrix} a&b\\-b'&a'\end{pmatrix}, a,b,\in \Gamma_n\cup \{0\}, \vert a\vert^2+\vert b\vert^2 = 1, ab^*\in E^n\Bigg\}\ .
\]
The subgroup $\widetilde K$ is a maximal compact subgroup of $\widetilde G$, isomorphic to $Spin(n+1)$ and is a twofold covering of $K$.

The sphere $S^n$ can be interpreted as $\widetilde G/\widetilde P$ (flag manifold) and as $\widetilde K/\widetilde M$ (compact symmetric space).

To determine the Lie algebra of $Spin_n$ in this model, we will describe one-parameter groups, and find the associated vector field.

First consider, for $2\leq j\leq r$
\[ m_t = \cos \frac{t}{2} +\sin \frac{t}{2} e_j, \quad t\in \mathbb R\ .\]
Then \[\sigma_{m_t}= \begin{pmatrix}\cos t & & -\sin t& &\\&1&&\\ \sin t& &\cos t&&\\ & & &1&\\\end{pmatrix}\]

For $2\leq j<k$, let 
\[ m_t = \cos \frac{t}{2} e_j + \sin \frac{t}{2} e_k\ . \]

Then
\[ \sigma_{m_t} = \begin{pmatrix} 1&&&&&&&&\\&&\cos t&&&-\sin t&&\\&&&&1&&&&\\&&\sin t&&&\cos t&&\\&&&&&&&&1\end{pmatrix}
\]
So, a basis of the Lie algebra of the spin group is
\[\frac{1}{2} e_j,\quad  2\leq j \leq n,\quad \frac{1}{2} e_je_k,\quad  2\leq j<k\leq n \ .
\]

\section{The spinor representation} 

We recall some well-known results on the spinor representations. We will use the standard realization of $Spin$ in $ Cl_n$, (not to be confused with the realization of the same group in $Cl_{n-1}$ we used in the previous section), namely

\[Spin_n=\{ a=v_1\dots v_{2k},\quad v_j\in \mathbb R\oplus \mathbb R^n, \vert v_j\vert =1, k\in \mathbb N\}\ .
\]

A finite dimensional complex Hilbert space $\mathcal H$ is said to be a \emph{Clifford module} if there exists $n$ skew-Hermitian operators $E_1,\dots, E_n$ on $\mathcal H$, such that
\begin{equation}\label{Ej}
E_iE_j + E_jE_i=-2\delta_{ij} \Id, \qquad 1\leq i,j\leq n\ .
\end{equation}
By the universal property of the Clifford algebra, there exists a (uniquely defined) representation $(\tau, \mathcal H)$ of the \emph{complex} Clifford algebra $\mathbb Cl_n$, which satisfies $\tau(e_j)=E_j$, and conversely, any representation of the Clifford algebra is obtained in this manner.
Note that, for any $a\in \mathbb Cl_n$, $\tau(\overline a)$ is the adjoint of $\tau(a)$ for the Hilbert product on $\Sigma $.

Viewing $Spin_n$ as a subset of $\mathbb Cl_n$, we obtain by restriction of $\tau$ a representation of $Spin_n$ on $\mathcal H$, still denoted by $\tau$, which is unitary by the last remark.

The results concerning the \emph{irreducible} representations of $\mathbb Cl_n$ depend on the parity of $n$.

Assume first that $n$ is \emph{even}. Then the \emph{complex Clifford algebra} $\mathbb Cl_n$ has a unique irreducible representation (up to isomorphism) for $n$ even. Let $\Sigma_n$ the complex (finite-dimensional) Hilbert space on which the representation acts, and denote by $\sigma : \mathbb Cl_n \longrightarrow \End(\Sigma_n)$ the representation. 
 When restricted to $Spin_n$ (or equivalently to the even part $\mathbb Cl_n^{ev}$), the representation $(\sigma, \Sigma_n)$ splits into two inequivalent representations. 
 
In case $n$ is \emph{odd}, then there are two inequivalent irreducible representations, denoted by $(\sigma^+, \Sigma^+_n)$ and $(\sigma^-, \Sigma_n^-)$. To distinguish them, let
\[\omega^\mathbb C = i^{[\frac{n+1}{2}]}e_1e_2\dots e_n
\]
be the \emph{volume element}. Observe that the oddness of $n$ implies that  $\omega^\mathbb C$ is in the center of $\mathbb Cl_n$ . By Schur's lemma, $\tau(\omega)=\pm\Id$ for $\tau$ an irreducible representation. Hence $\sigma^\pm(\omega^\mathbb C)=\pm \Id$ on $\Sigma_n^\pm$, which distinguishes the two representations, and shows that they are not equivalent. When restricted to $\mathbb Cl_n^{ev}$ (or to $Spin_n$), both restrictions of $\sigma^\pm$ stay irreducible and are equivalent representations. Let $\Sigma_n=\Sigma_n^+\oplus \Sigma_n^-$, and let $\sigma = \sigma^+\oplus \sigma^-$.

In any case of parity, we call $(\sigma, \Sigma_n)$ \emph{the spinor representation} of $\mathbb Cl_n$ (or of $Spin_n$). On $\Sigma$, there is a Hermitian scalar product for which $\sigma(x)$ is unitary for any $x\in \mathbb R^n$ with unit length. For this inner product, $\Sigma$ is a Clifford module. For $1\leq j\leq n$, let $E_j=\sigma(e_j)$. Then the $E_j$'s are skew Hermitian and satisfy the defining  relations \eqref{Ej}.

Finally, we have to connect the standard realization of $Spin_n$ with the realization of $\widetilde M$ in the Vahlen-Maass-Ahlfors approach. The linear map $\gamma : E^{n-1}\longmapsto \mathbb Cl^{ev}_n$ given by
\[\gamma(e_j) = e_1e_j, \quad 2\leq j\leq n
\]
satisfies $\gamma(e_i)\gamma(e_j) +\gamma(e_j)\gamma(e_i)= -2\delta_{ij} $ and hence can be extended to yield an isomorphism (still denoted by $\gamma$) of $Cl(E^{n-1})$ onto $Cl_n^{ev}$. The map 
$\gamma$ induces an isomorphism of $\widetilde M$ onto $Spin_n$. For $a\in Cl(E^{n-1})$,  let $\tau(a) = \sigma(\gamma(a))$. Then $(\tau, \Sigma)$ is a representation of $Cl(E^{n-1})$ and, by restriction to $\widetilde M$,  a representation of $\widetilde M$, called the \emph{spinor representation}. For $x\in E^n$, 
\begin{equation}\tau(x_1+x_2e_2+\dots+x_ne_n) =  x_1\Id + x_2 E_1E_2+\dots + x_n E_1E_n\ .
\end{equation}

Recall that the element $\widetilde w = \begin{pmatrix} 0&-1\\1&0\end{pmatrix}$ is a representative of the nontrivial Weyl group element.
\begin{proposition} Let $\tau$ be the spinor representation of $\widetilde M$, and let $w\tau$ be the representation of $\widetilde M$ given by $w\tau(m) = \tau(\widetilde w^{-1}m\widetilde w)$. Then
\[w\tau=\tau'\ ,
\]
where $\tau'$ is the restriction to $\widetilde M$ of the representation of $\mathbb Cl_{n-1}$ given by $\tau'(a)=\tau(a')$, for $a\in \mathbb Cl_{n-1}$.
\end{proposition}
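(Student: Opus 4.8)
The plan is to reduce the assertion to two short computations, one at the level of $2\times 2$ matrices over the Clifford algebra and one inside the Clifford group $\Gamma_n$. First I would conjugate a generic element of $\widetilde M$ by $\widetilde w$: writing $\widetilde w^{-1}=\begin{pmatrix}0&1\\-1&0\end{pmatrix}$, a direct multiplication gives
\[
\widetilde w^{-1}\begin{pmatrix} m&0\\0&{m^*}^{-1}\end{pmatrix}\widetilde w=\begin{pmatrix} {m^*}^{-1}&0\\0&m\end{pmatrix}\ .
\]
The right-hand side is again a diagonal Clifford matrix of the form $\begin{pmatrix}\mu&0\\0&{\mu^*}^{-1}\end{pmatrix}$ with $\mu={m^*}^{-1}$ (indeed ${\mu^*}^{-1}=m$, since the reversion is an anti-involution), and $\vert\mu\vert=\vert m\vert^{-1}=1$; hence $\widetilde w$ normalizes $\widetilde M$, so that $w\tau$ is genuinely a representation of $\widetilde M$, and the Clifford component of $\widetilde w^{-1}m\widetilde w$ is exactly ${m^*}^{-1}$. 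By the definition of the spinor representation this means $w\tau(m)=\tau\big({m^*}^{-1}\big)$.

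The second step is the identity ${m^*}^{-1}=m'$, valid for $m\in\Gamma_n$ with $\vert m\vert=1$. Using the relations recalled in Section 3: $m\overline m=\vert m\vert^2=1$ forces $\overline m=m^{-1}$; the conjugation is the composite of the principal automorphism and the reversion, so $\overline m=(m^*)'=(m')^*$; and since the reversion is an anti-involution, $(m')^*=m^{-1}$ yields $m'=(m^{-1})^*={(m^*)}^{-1}$. Substituting into the formula of the first step gives $w\tau(m)=\tau(m')=\tau'(m)$, which is the claim. Here $\tau'=\tau\circ(\,\cdot\,)'$ is indeed a representation because $a\mapsto a'$ is an algebra automorphism of $\mathbb Cl_{n-1}$.

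There is no real obstacle; the whole content is the two displays. The only points requiring a little care are bookkeeping ones: remembering that for a diagonal element of $\widetilde M$ the spinor representation reads off the upper-left Clifford entry; checking that the conjugate stays in $\widetilde M$, so that the statement is even meaningful; and composing the three Clifford involutions in the correct order (and on the correct side) when deriving ${m^*}^{-1}=m'$. As a consistency check one may also note that this is compatible with passing through the isomorphism $\gamma:Cl(E^{n-1})\to\mathbb Cl_n^{ev}$ and the representation $\sigma$, but that route is not needed, everything already being phrased through $\tau$.
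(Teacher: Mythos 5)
Your proof is correct and takes essentially the same approach as the paper: conjugate the diagonal Clifford matrix by $\widetilde w$ to obtain $\begin{pmatrix}{m^*}^{-1}&0\\0&m\end{pmatrix}$, and then use the identity ${m^*}^{-1}=m'$ valid for $m\in\Gamma_n$ with $\vert m\vert=1$. You merely spell out the latter identity in more detail than the paper, which simply asserts it; the two arguments are otherwise identical.
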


\begin{proof} For $a\in \Gamma_n$,

\[ \begin{pmatrix} 0&1\\-1&0\end{pmatrix} \begin{pmatrix} a&0\\0&{a^*}^{-1}\end{pmatrix}
 \begin{pmatrix} 0&-1\\1&0\end{pmatrix}= \begin{pmatrix}{a^*}^{-1}&0\\0&a\end{pmatrix}\ .
\]
If moreover, $\vert a \vert =1$, then ${a^*}^{-1}= a'$, so that the automorphism of $\widetilde M$ induced by $\widetilde w$ coincides with the principal automorphism. The statement follows.
\end{proof}

\begin{proposition}
 For any $a\in Cl(E^{n-1})$,
\begin{equation}\label{tautau'}
E_1\,\tau'(a) = \tau(a)\,E_1\ .
\end{equation}

\end{proposition}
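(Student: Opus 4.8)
The plan is to reduce the operator identity \eqref{tautau'} to a purely algebraic identity inside the complex Clifford algebra $\mathbb Cl_n$ and then to verify that identity on generators. Recall from the definitions preceding the statement that $\tau(a)=\sigma(\gamma(a))$ for $a\in Cl(E^{n-1})$, that $\tau'(a)=\tau(a')=\sigma(\gamma(a'))$, and that $E_1=\sigma(e_1)$. Since $\sigma$ is an algebra homomorphism of $\mathbb Cl_n$, it therefore suffices to prove
\[
e_1\,\gamma(a')=\gamma(a)\,e_1\qquad\text{in }\mathbb Cl_n,\ \text{for every }a\in Cl(E^{n-1}),
\]
and then apply $\sigma$ to both sides.

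To establish this algebraic identity I would first note that $e_1$ is invertible in $\mathbb Cl_n$ with $e_1^{-1}=-e_1$ (since $e_1^2=-1$), so the identity is equivalent to $\gamma(a)=e_1\,\gamma(a')\,e_1^{-1}$. Now both sides, regarded as functions of $a$, are algebra homomorphisms from $Cl(E^{n-1})$ to the even part $\mathbb Cl_n^{ev}$: the left-hand side is $\gamma$ itself, while the right-hand side is the composite of the principal automorphism $a\mapsto a'$ of $Cl(E^{n-1})$, the isomorphism $\gamma\colon Cl(E^{n-1})\to\mathbb Cl_n^{ev}$, and the inner automorphism $x\mapsto e_1xe_1^{-1}$ of $\mathbb Cl_n$, which preserves $\mathbb Cl_n^{ev}$. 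Two algebra homomorphisms out of $Cl(E^{n-1})$ that agree on a generating set coincide, so it is enough to test them on $1$ and on $e_j$ for $2\le j\le n$. On $1$ both give $1$; on $e_j$ one computes, using $e_1^2=-1$ and $e_1e_j=-e_je_1$,
\[
e_1\,\gamma(e_j')\,e_1^{-1}=e_1(-e_1e_j)e_1^{-1}=e_je_1^{-1}=e_1e_j=\gamma(e_j),
\]
which finishes the verification. Applying $\sigma$ to $e_1\gamma(a')=\gamma(a)e_1$ then yields $E_1\tau'(a)=\tau(a)E_1$, i.e.\ \eqref{tautau'}.

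There is essentially no obstacle here; the only points requiring care are the sign bookkeeping in $\mathbb Cl_n$ (notably $e_1^2=-1$ and the anticommutation $e_1e_j=-e_je_1$ for $2\le j\le n$) and the observation — which is what keeps the argument short — that, because $\gamma$ and $x\mapsto e_1xe_1^{-1}$ are algebra maps, it suffices to check the identity on the generators $e_2,\dots,e_n$ rather than on a monomial basis of $Cl(E^{n-1})$. Should one prefer to avoid the homomorphism language, the same computation can instead be organized as an induction on the degree of a monomial $e_{j_1}\cdots e_{j_k}$, moving $e_1$ past one factor at a time and absorbing the compensating sign into the principal automorphism.
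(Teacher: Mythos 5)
Your proof is correct and takes essentially the same approach as the paper: reduce to a check on generators using the fact that $\tau$ and $\tau'$ are algebra homomorphisms, then verify by a direct sign computation with $e_1^2=-1$ and the anticommutation relations. The paper's proof is phrased at the level of the operators $E_j$ acting on $\Sigma$ and checks the identity on an arbitrary ``vector'' $x\in E^n=\mathbb R\oplus E^{n-1}$, whereas you lift the identity to $e_1\gamma(a')=\gamma(a)e_1$ inside $\mathbb Cl_n$ and spell out the multiplicativity argument explicitly; the content is the same, and your version is if anything a touch more careful in justifying why checking generators suffices.
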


\begin{proof} It suffices to verify the statement for any vector $x\in E^n$. Let $x= x_1+x_2e_2+\dots +x_ne_n$. Then
\[ \tau(x)E_1 = (x_1\Id + x_2 E_1E_2+\dots x_nE_1E_n)\,E_1 = x_1 E_1 +x_2E_2+\dots +x_nE_n\]
whereas
\[ E_1 \tau'(x) = E_1\,(x_1\Id-x_2E_1E_2-\dots -x_nE_1E_n) = x_1 E_1+x_2E_2+\dots +x_nE_n\ .\]
\end{proof}

\section{ The principal spinorial series of $\widetilde G$ and the associated intertwining operators}

Let us first recall the general theory of Knapp-Stein intertwining operators. Let $G$ be a semisimple Lie group (connected and with finite center), $P$ a minimal parabolic subgroup. Let $\theta$ be a Cartan involution, with fixed points $K$, which is a maximal compact subgroup of $G$.  Let $P=MAN$ be a Langlands decomposition of $P$ adapted to $\theta$. In particular, $M=P\cap K$. Let  $M'$ be the normalizer of $A$ and $W\simeq M'/M$ be the  corresponding Weyl group. Let $X=G/P\simeq K/M$, and let $o=eP$ be the origin in $X$.
Let $\mathfrak a$ be the Lie algebra of $A$, and let $\exp$ be the exponential map from $\mathfrak a$ onto $A$. Let $\rho\in \mathfrak a'$ (the dual of $\mathfrak a$) be the half-sum of the positive roots relative to $N$.

The map 
\[ K\times A\times N \ni (k,a,n)\longmapsto kan \in G\]
is a diffeomorphism onto $G$. If $g$ is in $G$, we write $g=\kappa(a) \exp H(g) \nu(g)$ for the \emph{Iwasawa decomposition} .

Let $\overline N = \theta N$. The map
\[\overline N\times M\times A\times N \ni (\overline n,m,a,n)\longmapsto \overline nman \in G\]
is a diffeomorphism onto a dense open set of $G$. For $g$ an element in the image, let
\[ g=\overline n(g) m(g)a(g)n(g)\]
be the corresponding \emph{Bruhat decomposition}.

Let $\tau$ be a unitary representation of $M$ on a (finite dimensional) Hilbert space $V$ and let $\lambda$ be a complex linear form on $\mathfrak a = Lie(A)$. Let $\tau_\lambda$ be the representation of $P$ defined by  
\[ \tau_\lambda(man) = a^\lambda \tau(m), \quad m\in M,\ a\in A,\ n\in N\ ,
\]
where $a^\lambda = e^{\lambda(\log a)}$.

Form the \emph{induced representation} $\pi_{\tau,\lambda}= \Ind_{MAN}^G (\sigma\otimes\exp \lambda\otimes 1)$.
We will work with the \emph{noncompact picture} of this induced  representation. Introduce the space $L^2_\lambda(\overline N)$ as the space of functions $f$ on $\overline N$, valued in $V$, which satisfy
\[ \int_{\overline N} \vert f(x)\vert^2 e^{2\Re \lambda \big(H(\overline n)\big)} d\overline n <+\infty\ .\]
 We state the noncompact realization of the induced representation as a proposition (see \cite{kn} ch VII).

\begin{proposition} For $g\in G$, 
\begin{equation}
\pi_{\tau,\lambda}(g) f (\overline n) = e^{-(\lambda+\rho)\log a(g^{-1}\overline n)}\tau\big(m(g^{-1} \overline n)\big)^{-1} f\big(\overline n(g^{-1}\overline n)\big)\ .
\end{equation}
defines a representation $\pi_{\tau,\lambda}$ of $G$ by bounded operators on $L^2_\lambda(\overline N)$.
\end{proposition}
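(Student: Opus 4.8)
The plan is to obtain the non-compact picture by restriction from the ``abstract'' induced representation on $\widetilde G/P$ (or $G/P$), where the homomorphism property is automatic, and only afterwards to identify the Hilbert space $L^2_\lambda(\overline N)$ and check boundedness. First I would realize $\pi_{\tau,\lambda}$ as the left regular action $(\pi(g)F)(x)=F(g^{-1}x)$ on the space of measurable $V$-valued functions $F$ on $G$ satisfying the covariance rule $F(gman)=a^{-(\lambda+\rho)}\tau(m)^{-1}F(g)$ for all $man\in P$ (normalized induction: the shift by $\rho$ is the square root of the modular character $\delta_P(man)=a^{2\rho}$), with $k\mapsto F(k)$ square integrable on $K$. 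On this space the homomorphism property is trivially true, and the unwinding $(\pi(g)F)(k)=F(g^{-1}k)=e^{-(\lambda+\rho)H(g^{-1}k)}\tau\big(\mu(g^{-1}k)\big)^{-1}F(\kappa(g^{-1}k))$, using the Iwasawa decomposition $g^{-1}k=\kappa(g^{-1}k)\exp H(g^{-1}k)\,\mu(g^{-1}k)$, shows that each $\pi(g)$ is bounded on $L^2(K/M,V)$, because $k\mapsto e^{-(\lambda+\rho)H(g^{-1}k)}$ is continuous hence bounded on the compact group $K$, while $\tau(\mu(g^{-1}k))$ is unitary.

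Next I would pass to $\overline N$. Since $\overline N MAN$ is open and dense in $G$, the restriction $F\mapsto f$, $f(\overline n)=F(\overline n)$, is injective, and transporting the action is a matter of plugging the Bruhat decomposition into the left translation: for almost every $\overline n$ one has $g^{-1}\overline n=\overline n(g^{-1}\overline n)\,m(g^{-1}\overline n)\,a(g^{-1}\overline n)\,n(g^{-1}\overline n)$, and applying $F$ together with the covariance rule gives precisely
\[
(\pi_{\tau,\lambda}(g)f)(\overline n)=a(g^{-1}\overline n)^{-(\lambda+\rho)}\tau\big(m(g^{-1}\overline n)\big)^{-1}f\big(\overline n(g^{-1}\overline n)\big)=e^{-(\lambda+\rho)\log a(g^{-1}\overline n)}\tau\big(m(g^{-1}\overline n)\big)^{-1}f\big(\overline n(g^{-1}\overline n)\big).
\]
The homomorphism property $\pi_{\tau,\lambda}(g_1g_2)=\pi_{\tau,\lambda}(g_1)\pi_{\tau,\lambda}(g_2)$ is then inherited verbatim from the left regular action, and strong continuity likewise (it holds in the compact picture by continuity of the cocycle and dominated convergence). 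One point requiring a word of care is that $g^{-1}\overline n$ lies in the big Bruhat cell only for $\overline n$ outside a lower-dimensional, hence null, subset of $\overline N$; this is harmless for an $L^2$ statement.

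Finally I would match the norms. Writing $\overline n=\kappa(\overline n)\exp H(\overline n)\,\nu(\overline n)$ in Iwasawa form and $\varphi(k)=F(k)$, the covariance rule gives $f(\overline n)=e^{-(\lambda+\rho)H(\overline n)}\varphi(\kappa(\overline n))$, hence $|f(\overline n)|^2 e^{2\Re\lambda(H(\overline n))}=|\varphi(\kappa(\overline n))|^2 e^{-2\rho(H(\overline n))}$; combined with the classical Jacobian identity $\int_{\overline N}\phi(\kappa(\overline n))\,e^{-2\rho(H(\overline n))}\,d\overline n=\int_{K/M}\phi(k)\,dk$ (with $d\overline n$ suitably normalized), this yields $\|f\|_{L^2_\lambda(\overline N)}=\|\varphi\|_{L^2(K/M,V)}$, so the restriction map is an isometric isomorphism of the compact picture onto $L^2_\lambda(\overline N)$. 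Boundedness of $\pi_{\tau,\lambda}(g)$ on $L^2_\lambda(\overline N)$ follows. The main technical obstacle is exactly this bookkeeping: keeping the Iwasawa decomposition (used for the Hilbert space and for boundedness) and the Bruhat decomposition (used for the explicit formula) straight, together with the change-of-variables formula relating them; once these are in place the argument is purely formal. (All of this is of course standard; see \cite{kn}, ch.~VII.)
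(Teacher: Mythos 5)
Your argument is correct and is essentially the standard one: the paper itself gives no proof here, merely citing Knapp, Ch.\ VII, and what you have written out is the content of that reference (realize the representation abstractly on $G$ with the right $P$-covariance, restrict along the open dense Bruhat cell $\overline N P$, and use the Jacobian identity $d\overline n\, e^{-2\rho(H(\overline n))}\leftrightarrow dk$ to match the $L^2$-norms). The homomorphism property, the unwinding of the Bruhat decomposition of $g^{-1}\overline n$, and the isometry onto $L^2_\lambda(\overline N)$ are all handled correctly.

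One small slip worth flagging: when you unwind in the compact picture you write $g^{-1}k=\kappa(g^{-1}k)\exp H(g^{-1}k)\,\mu(g^{-1}k)$ and insert a factor $\tau\big(\mu(g^{-1}k)\big)^{-1}$, as if $\mu$ were an $M$-component. In the $KAN$ Iwasawa decomposition the third component $\nu(g^{-1}k)$ lies in $N$, on which $\tau_\lambda$ is trivial; the compact-picture formula is therefore simply $(\pi(g)F)(k)=e^{-(\lambda+\rho)H(g^{-1}k)}F\big(\kappa(g^{-1}k)\big)$, with the right $M$-covariance of $F$ absorbing the rest. Since $\tau$ is unitary this has no effect on your boundedness estimate or on the norm identity, so the argument stands as is, but the extra $\tau$-factor should be deleted.
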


Let $w$ be an element of $W$, and choose a representative (still denoted by $w$) in $M'$. Let $w\tau$ be the representation of $\widetilde M$ defined by $w\tau(m) = \tau(\widetilde w^{-1}m\widetilde w)$. The Knapp-Stein theory of intertwining operators offers a construction of an intertwining operator between $\pi_{\tau,\lambda}$ and $\pi_{w\tau,w\lambda}$. We state it as a proposition (see \cite{kn} ch VII, (7.39).

Set, for $f$ a function on $\overline N$ and $x\in \overline N$ 
\[J_{\tau,\lambda,w} f(x)= \int_{\overline N} e^{(-\rho+\lambda) \log a(w^{-1}\overline n)} \tau(m(w^{-1}\overline n)) f(x\overline n) d\overline n
\]

\begin{proposition} The operator $J_\lambda$ is an intertwining operator between $\pi_{\tau,\lambda}$ and $\pi_{w\tau,w\lambda}$, namely for any $g\in G$,
\[J_{\tau,\lambda,w}\circ \pi_{\tau, \lambda}(g) = \pi_{w\tau, w\tau_\lambda}(g)\circ J_{\tau,\lambda,w}\ .
\]
\end{proposition}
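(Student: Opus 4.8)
The plan is to verify the intertwining identity by a direct change-of-variables computation, exactly as in the classical Knapp--Stein setup (\cite{kn}, ch.~VII), adapted to the spinorial case. The key point is that $J_{\tau,\lambda,w}$ is essentially a convolution operator on $\overline N$, so that its intertwining property reduces to the cocycle identities satisfied by the Iwasawa factors $a(\cdot)$, $m(\cdot)$ together with the $P$-equivariance built into the definition of $\pi_{\tau,\lambda}$. Throughout I will use that $\overline N$ is abelian (here $\overline N \simeq E^n$ with addition), so $x\overline n = \overline n x$ and integration over $\overline N$ is translation-invariant.

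First I would write out $\bigl(J_{\tau,\lambda,w}\circ \pi_{\tau,\lambda}(g)\bigr) f(x)$ using the two displayed formulas, producing a single integral over $\overline N$ with integrand
\[
e^{(-\rho+\lambda)\log a(w^{-1}\overline n)}\,\tau\!\bigl(m(w^{-1}\overline n)\bigr)\, e^{-(\lambda+\rho)\log a(g^{-1}x\overline n)}\,\tau\!\bigl(m(g^{-1}x\overline n)\bigr)^{-1} f\bigl(\overline n(g^{-1}x\overline n)\bigr)\,d\overline n\ .
\]
Next I would substitute $\overline n \mapsto \overline n'$ via $x\overline n = \overline n'\,\text{(something in }MAN\text{)}$, i.e.\ decompose $x\overline n$ using the Bruhat decomposition adapted to the $w$-conjugate flag; the standard trick is to set $\overline n' = \overline n(g^{-1}x\overline n)$ so that $g^{-1}x\overline n = \overline n'\, m'a'n'$ with $m' = m(g^{-1}x\overline n)$, etc., and track how the Jacobian of this substitution produces the factor $e^{-2\rho\log a'}$. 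The cocycle relations $a(g_1 g_2) = a(g_1 g_2 \text{-decomposition})$ for $\log a$ and the corresponding multiplicativity of the $m$-part then let me recombine the exponents into $e^{(-\rho+w\lambda)\log a(w^{-1}\overline n')}$ times $\tau(m(w^{-1}\overline n'))$ times the $\pi_{w\tau,w\lambda}(g)$-twist applied to $J_{\tau,\lambda,w}f$ evaluated at $x$. Matching the $\tau$-arguments is where the representation $w\tau(m) = \tau(\widetilde w^{-1} m \widetilde w)$ enters: the $M$-part picked up from conjugating by the Weyl representative $w$ is exactly what converts $\tau$ into $w\tau$ and $\lambda$ into $w\lambda$.

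The main obstacle will be bookkeeping the Jacobian and the Iwasawa/Bruhat cocycles carefully enough that the $\rho$-shifts cancel correctly — in particular verifying that the measure transformation under $\overline n \mapsto \overline n(g\overline n)$ contributes precisely $e^{-2\rho\log a(g\overline n)}$, which is the one genuinely analytic (as opposed to purely algebraic) ingredient. Everything else is formal manipulation with the group law. Since this identity is stated as a proposition quoted from \cite{kn}, I would in fact present only the reduction to these standard facts and refer to \cite{kn}, ch.~VII, (7.39) and the surrounding discussion for the detailed verification, noting that the only modification in our situation is that $V$, $\tau$ are the spinor data rather than a general $M$-type, which plays no role in the formal argument. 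A remark worth making explicitly is that convergence of the integral defining $J_{\tau,\lambda,w}$ holds for $\Re\lambda$ in a suitable cone, and the intertwining identity then extends to all $\lambda$ by meromorphic continuation — this is precisely the feature we exploit in the next section when we take residues.
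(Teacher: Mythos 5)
The paper gives no proof of this proposition at all: the sentence immediately preceding it ("We state it as a proposition (see \cite{kn} ch.\,VII, (7.39))") makes clear that the result is simply being quoted from Knapp's book, and the only substantive remark the paper adds is the caveat about convergence and meromorphic continuation. Your proposal correctly recognizes this and ultimately defers to \cite{kn} as well, so in substance the two approaches coincide; your extra sketch of the change-of-variables argument (with the Jacobian $e^{-2\rho\log a(\cdot)}$ and Iwasawa/Bruhat cocycle bookkeeping) is a reasonable outline of the standard verification, but note that when the paper itself wants a direct proof of the intertwining relation it does so only for the specific spinorial operator in the following theorem, and there it uses the explicit Clifford-matrix formulas (Proposition~\ref{globalcov}) rather than the abstract cocycle argument you sketch.
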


The proposition lets aside the convergence of the integral, which is true  for $\lambda$ in some open set of $\mathfrak a'_\mathbb C$. The intertwining operator $J_{\tau,\lambda,w}$ can then be extended meromorphically to the whole space.

This general scheme applies to our situation. Let $(\tau, \Sigma)$ be the spinor representation of $\widetilde M$ and let $\lambda\in \mathbb C$. Define the representation $\tau_\lambda$ of $\widetilde P$ by
\[\tau_\lambda(ma_tn) = t^{2\lambda} \tau(m)\ .
\]

Let $\pi_\lambda= {\Ind\, } _{\widetilde P}^{\widetilde G}\, \tau_\lambda$. Following the procedure just described above, we obtain the following realization of these representations (\emph{noncompact picture}).

\begin{theorem} For $\lambda\in \mathbb C$ and $g=\begin{pmatrix} a&b\\c&d\end{pmatrix}$, the formula
\[\pi_\lambda(g) f(x) = \vert d^*-b^*x\vert^{-2\lambda-n}\, {\tau\Bigg(\frac{d^*-b^*x}{\vert d^*-b^*x\vert}\Bigg)}^{-1}\,f\big((-c^*+a^*x)(d^*-b^*x)^{-1}\big)
\]
defines a representation of $\widetilde G$ by bounded operators on $L^2_\lambda (E^n,\Sigma)$.
\end{theorem}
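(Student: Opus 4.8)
The plan is to instantiate the general Knapp–Stein noncompact-picture formula (the first Proposition of this section) in the Vahlen–Maass–Ahlfors model $\widetilde G = SL_2(\Gamma_n)$, and to simplify every term using the identities collected in Sections 3 and 4. Concretely, the abstract formula reads $\pi_{\tau,\lambda}(g)f(\overline n) = e^{-(\lambda+\rho)\log a(g^{-1}\overline n)}\,\tau\big(m(g^{-1}\overline n)\big)^{-1} f\big(\overline n(g^{-1}\overline n)\big)$, where $\overline n$ runs over $\overline N \simeq E^n$ (parametrized by $x$) and $g^{-1}\overline n = \overline n' m' a' n'$ is the Bruhat decomposition. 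So first I would fix $g=\begin{pmatrix} a&b\\c&d\end{pmatrix}$, compute $g^{-1} = \begin{pmatrix} d^*&-b^*\\-c^*&a^*\end{pmatrix}$ (which one checks directly from $ii)$ of Definition \ref{Cmatrix}), and then multiply by $\overline n_x = \begin{pmatrix} 1&0\\x&1\end{pmatrix}$ to obtain $g^{-1}\overline n_x = \begin{pmatrix} d^* - b^* x & -b^*\\ -c^* + a^* x & a^*\end{pmatrix}$.

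Next I would read off the three pieces from the Bruhat decomposition $\begin{pmatrix}\alpha&\beta\\\gamma&\delta\end{pmatrix} = \begin{pmatrix}1&0\\\gamma\alpha^{-1}&1\end{pmatrix}\begin{pmatrix}\alpha&0\\0&{\alpha^*}^{-1}\end{pmatrix}\begin{pmatrix}1&\alpha^{-1}\beta\\0&1\end{pmatrix}$ stated at the end of Section 3, applied with $\alpha = d^*-b^*x$. This gives immediately: the $A$-component governed by $\alpha = d^*-b^*x$, hence (using $A = \{a_t = \mathrm{diag}(t,t^{-1})\}$ and the fact that the "$t$" must be recovered from the norm, since $\alpha$ itself is a Clifford number) $a(g^{-1}\overline n_x) = a_{|d^*-b^*x|}$; the $\widetilde M$-component equal to $\tfrac{d^*-b^*x}{|d^*-b^*x|}$ (in the realization of $\widetilde M$ as unit-norm elements of $\Gamma_n$); and the $\overline N$-component $\gamma\alpha^{-1} = (-c^*+a^*x)(d^*-b^*x)^{-1}$, which is exactly $g^{-1}(x)$ under the Möbius action, as it should be. Feeding these into the abstract formula, together with $\rho \leftrightarrow n$ and $a_t^\mu = t^\mu$ (so $a_t^{\lambda+\rho} = t^{2\lambda+n}$, recalling the normalization $\tau_\lambda(ma_tn)=t^{2\lambda}\tau(m)$ which makes the spectral parameter "$2\lambda$"), yields precisely
\[
\pi_\lambda(g)f(x) = |d^*-b^*x|^{-2\lambda-n}\,\tau\!\left(\frac{d^*-b^*x}{|d^*-b^*x|}\right)^{-1} f\big((-c^*+a^*x)(d^*-b^*x)^{-1}\big).
\]

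I would then dispatch two remaining points. First, the claim that the operators are bounded on $L^2_\lambda(E^n,\Sigma)$: this follows from the general proposition once one checks that the measure $e^{2\Re\lambda\,H(\overline n)}d\overline n$ on $\overline N$ is (up to the $\lambda$-twist) just Lebesgue measure $dx$ on $E^n$ — a standard computation of the Jacobian in this rank-one case — so $\tau$ being unitary on $\widetilde M$ makes the operator an $L^2$-isometry up to the conformal weight, which is exactly absorbed by the definition of $L^2_\lambda$. Second, one should confirm the argument of $\pi_\lambda(g)$ is again in $\overline N \simeq E^n$ for almost every $x$, and that the formula is consistent with the cocycle/group law; this is automatic from the general theory but can be cross-checked against Proposition \ref{invcov} (the identity $-c^*y+a^*=(cx+d)^{-1}$) and the global covariance formula \eqref{globalcov}.

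The main obstacle is bookkeeping rather than conceptual: one must be scrupulous about (a) the two different conventions for the spectral parameter (the "$\lambda$" of the abstract $\mathfrak a'_{\mathbb C}$ theory versus the "$2\lambda$" built into $\tau_\lambda(ma_tn)=t^{2\lambda}\tau(m)$ and the identification $\rho \leftrightarrow n$, $a_t^\mu \leftrightarrow t^\mu$), and (b) correctly extracting the scalar $t=|d^*-b^*x|$ from the Clifford-valued "$\alpha$" in the Bruhat decomposition, i.e. distinguishing the $A$-part from the $\widetilde M$-part of $\widetilde L = \widetilde M A$. Getting the exponent as $-2\lambda-n$ (and not, say, $-\lambda-n$ or $-2\lambda-2n$) and the $\tau$-argument as the normalized $d^*-b^*x$ with an inverse are precisely the places where a sign or factor of two can slip, so I would verify the final formula on the three generating subgroups $N$, $A$, $\widetilde M$ (where $c=0$) and on $\widetilde w$ as a sanity check.
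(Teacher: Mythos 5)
Your proposal is correct and follows essentially the same route as the paper: compute $g^{-1}\overline n_x = \begin{pmatrix} d^*-b^*x & -b^* \\ -c^*+a^*x & a^*\end{pmatrix}$, read off the Bruhat components using the factorization from Section 3, and substitute into the abstract Knapp--Stein formula with the normalization $\tau_\lambda(ma_t n)=t^{2\lambda}\tau(m)$ and $\rho\leftrightarrow n$. The paper's proof is simply a terser version of this, leaving the $\widetilde L=\widetilde M A$ splitting and the $\lambda\leftrightarrow 2\lambda$ reparametrization implicit, so your extra bookkeeping remarks are accurate but not a different method.
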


\begin{proof} For $x\in E$,
 \[g^{-1}\overline n_x= \begin{pmatrix} d^*-b^*x&-b^*\\-c^* +a^*x&a^*\end{pmatrix} .\]
 The components in the Bruhat decomposition of this element are
 \[(ma)(g^{-1} \overline n_x) = d^*-b^*x, \quad \overline n (g^{-1} \overline n_x)= (-c^*+a^*x)(d^*-b^*x)^{-1}\ .
 \]
 Hence the formula is a consequence of the general statement.
\end{proof}

There is another closely related representation. In fact, the same construction can be done using the representation $\tau'$ of $\widetilde M$ we introduced earlier instead of $\tau$. The corresponding representation will be denoted by $\pi_\lambda'$. It is related to $\pi_\lambda$ be the following elementary result, which follows from Proposition \ref{tautau'}.

\begin{proposition} For any $g\in \widetilde G$
\begin{equation}\label{E1intw}
E_1\pi'_\lambda (g) = \pi_\lambda(g)E_1\ .
\end{equation}
\end{proposition}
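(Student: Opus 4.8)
The plan is to reduce the claimed identity to the intertwining relation \eqref{tautau'} between $\tau$ and $\tau'$, applied pointwise in the explicit noncompact formula for the two induced representations. First I would write out both sides: using the Theorem above, for $g=\begin{pmatrix} a&b\\c&d\end{pmatrix}$ and $x\in E^n$,
\[
\pi_\lambda(g)(E_1 f)(x) = \vert d^*-b^*x\vert^{-2\lambda-n}\,{\tau\Bigl(\tfrac{d^*-b^*x}{\vert d^*-b^*x\vert}\Bigr)}^{-1}\,E_1\, f\bigl((-c^*+a^*x)(d^*-b^*x)^{-1}\bigr),
\]
while $\pi'_\lambda(g)$ is given by the same formula with $\tau$ replaced by $\tau'$. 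Since the scalar factor $\vert d^*-b^*x\vert^{-2\lambda-n}$ and the argument $(-c^*+a^*x)(d^*-b^*x)^{-1}$ of $f$ are identical in both expressions, the whole statement collapses to the operator identity
\[
\tau\Bigl(\tfrac{d^*-b^*x}{\vert d^*-b^*x\vert}\Bigr)^{-1}\,E_1 \;=\; E_1\,\tau'\Bigl(\tfrac{d^*-b^*x}{\vert d^*-b^*x\vert}\Bigr)^{-1}
\]
for each fixed $x$ at which $g$ is defined.

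The element $m := \tfrac{d^*-b^*x}{\vert d^*-b^*x\vert}$ lies in $\widetilde M$ (it is a unit element of $\Gamma_n$, as the argument preceding Proposition~\ref{localcov} shows in the analogous case $cx+d$). So it suffices to know that $\tau(m)^{-1}E_1 = E_1\tau'(m)^{-1}$ for every $m\in\widetilde M$, equivalently $E_1\tau'(m) = \tau(m)E_1$. But this is exactly \eqref{tautau'}, which was proved for all $a\in Cl(E^{n-1})$ and in particular for $m\in\widetilde M\subset Cl(E^{n-1})$. Taking inverses of both sides of $E_1\tau'(m)=\tau(m)E_1$ — legitimate since $m$ is invertible in the Clifford algebra and $\tau,\tau'$ are representations, so $\tau(m),\tau'(m)$ are invertible operators, and $E_1=\tau(e_1)$ is invertible with $E_1^{-1}=-E_1$ — gives $\tau'(m)^{-1}E_1^{-1} = E_1^{-1}\tau(m)^{-1}$, hence $\tau(m)^{-1}E_1 = E_1\tau'(m)^{-1}$ after left- and right-multiplying by $E_1$ appropriately (or simply: $E_1 A = B E_1 \Rightarrow A E_1^{-1} = E_1^{-1} B \Rightarrow E_1 A^{-1} = B^{-1} E_1$ reading the relation as conjugation). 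Substituting back yields $\pi_\lambda(g)(E_1 f) = E_1\,\pi'_\lambda(g) f$, i.e. \eqref{E1intw}.

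There is essentially no analytic obstacle here, since the identity is purely algebraic and holds for every $g$ and every $x$ before any meromorphic continuation in $\lambda$ is invoked; the only point requiring a word of care is the bookkeeping of inverses and the observation that $d^*-b^*x$, when nonzero, is a nonzero element of $\Gamma_n$ so that $m$ genuinely lies in $\widetilde M$ and \eqref{tautau'} applies — this is the same elementary fact about Clifford matrices used to define the action $g(x)$. I would state that remark in one line and then simply quote Proposition \ref{tautau'}. The main (very minor) thing to get right is consistency of the two normalizations: that $\pi'_\lambda$ is defined by the formula of the Theorem with $\tau'$ in place of $\tau$ — once that is fixed, the proof is a two-line substitution.
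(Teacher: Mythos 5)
Your proof is correct and takes the same route as the paper: the paper simply remarks that the proposition ``follows from Proposition~\ref{tautau'}'', and you have spelled out the substitution into the explicit noncompact formula and the elementary inversion of $E_1\tau'(m)=\tau(m)E_1$ to get $\tau(m)^{-1}E_1=E_1\tau'(m)^{-1}$. One small slip worth flagging: $E_1=\sigma(e_1)$, not $\tau(e_1)$ --- under the identification $E^n\simeq\mathbb R\oplus E^{n-1}\subset Cl(E^{n-1})$ the element $e_1$ is the unit of $Cl(E^{n-1})$, so $\tau(e_1)=\Id$; the facts you actually use ($E_1$ invertible, $E_1^{-1}=-E_1$, from $E_1^2=-\Id$) are nonetheless correct.
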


We now apply Knapp-Stein theory to get an intertwining operator between $\pi_\lambda$ and $\pi_{-\lambda}'$.

Let $\overline n = \begin{pmatrix}1&0\\y&1\end{pmatrix}$. Then
\[ w^{-1}\overline n = \begin{pmatrix}0&1\\-1&0\end{pmatrix} \begin{pmatrix}1&0\\y&1\end{pmatrix}=\begin{pmatrix} y&1\\-1&0\end{pmatrix}
\]
so that \[t(w^{-1} \overline n) = \vert y\vert, \quad m(w^{-1}\overline n) =  \frac{y}{\vert y\vert}\ .\]
Hence the Knapp-Stein operator is given by

\[J_\lambda f(x) = \int_{E^n} \vert y\vert^{2\lambda-n} \tau\Big(\frac{y}{\vert y\vert}\Big)f(x-y) dy\]

\begin{theorem} For $\lambda\in \mathbb C$, let
\begin{equation}
J_\lambda f(x) = \int_{E^n} \vert y\vert^{2\lambda-n} \tau\Big(\frac{y}{\vert y\vert}\Big)f(x-y) dy\ .
\end{equation}
For $\Re \lambda >0$ and $f\in L^2_\lambda(E^n, \Sigma)$, the integral converges and the operator $J_\lambda$ thus defined is bounded on $L^2_\lambda(E^n, \Sigma)$ and for any $g\in \widetilde G$ satisfies
\begin{equation}\label{intw}
J_\lambda\, \pi_\lambda(g) =\pi_{-\lambda}'(g)\,  J_\lambda\ .
\end{equation}
\end{theorem}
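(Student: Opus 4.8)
The plan is to recognize $J_\lambda$ as a special case of the Knapp-Stein operator $J_{\tau,\lambda,w}$ for the Weyl element $w$ realized by $\widetilde w=\begin{pmatrix}0&-1\\1&0\end{pmatrix}$, and then to read off convergence, boundedness, and the intertwining relation from the general theory recalled above, together with the identification $w\tau=\tau'$ and $w\lambda=-\lambda$.

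First I would justify convergence. With $\overline n_y=\begin{pmatrix}1&0\\y&1\end{pmatrix}$ we computed $t(w^{-1}\overline n_y)=\vert y\vert$ and $m(w^{-1}\overline n_y)=y/\vert y\vert$, so that $e^{(-\rho+\lambda)\log a(w^{-1}\overline n_y)}=\vert y\vert^{2\lambda-n}$ (recall $\rho$ corresponds to $n$ and $a_t^\mu=t^\mu$), giving exactly the stated kernel $\vert y\vert^{2\lambda-n}\tau(y/\vert y\vert)$. Near $y=0$ the kernel behaves like $\vert y\vert^{2\Re\lambda-n}$ against the (bounded) factor $\tau(y/\vert y\vert)$, which is locally integrable precisely when $\Re\lambda>0$; near $\infty$ one uses the defining weight of $L^2_\lambda(E^n,\Sigma)$, i.e. that $f$ is square-integrable against $e^{2\Re\lambda\, H(\overline n_x)}$, to see that the convolution integral makes sense and defines a bounded operator. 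This is where the only genuine analytic work lies, and I expect it to be the main obstacle: one must check that the general Knapp-Stein convergence criterion (valid for $\lambda$ in a suitable cone, cf. \cite{kn} ch.\ VII) specializes to the half-plane $\Re\lambda>0$ for this rank-one group, and that the resulting operator is bounded on the indicated $L^2$-space rather than merely densely defined. In rank one this is classical; it amounts to estimating the convolution against the homogeneous kernel with the correct homogeneity degree $2\lambda-n$ relative to $\rho=n$.

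Next I would establish the intertwining identity \eqref{intw}. Two routes are available. The direct route: apply $J_\lambda$ to $\pi_\lambda(g)f$ using the explicit formula for $\pi_\lambda(g)$ from the Theorem above, perform the change of variables in the $y$-integral dictated by the cocycle, and use Proposition \ref{globalcov} together with the factorization in Proposition \ref{localcov} of the differential $Dg$ into conformal factor times rotation, matching the $\vert\cdot\vert$-powers and the $\tau(\cdot)$-factors on both sides; the appearance of $\tau'$ rather than $\tau$ on the target side comes exactly from the principal automorphism $a\mapsto a'$ entering via $w$, as in the Proposition identifying $w\tau$ with $\tau'$. The cleaner route, which I would prefer to present: invoke the general Knapp-Stein intertwining proposition, which gives $J_{\tau,\lambda,w}\circ\pi_{\tau,\lambda}(g)=\pi_{w\tau,w\lambda}(g)\circ J_{\tau,\lambda,w}$; then substitute $w\tau=\tau'$ and $w\lambda=-\lambda$ (since $wa_sw^{-1}=a_{-s}$), and observe that $\pi_{\tau',-\lambda}$ is precisely the representation denoted $\pi'_{-\lambda}$. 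This yields \eqref{intw} immediately.

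Finally I would remark that meromorphic continuation in $\lambda$ of both sides of \eqref{intw} is automatic from the general theory, so that the intertwining relation persists for all $\lambda\in\mathbb C$ away from the poles; this is the hook for the subsequent identification of residues with powers of the Dirac operator. The only point requiring care in the write-up is bookkeeping of normalizations: the factor $2\lambda$ in $\tau_\lambda(ma_tn)=t^{2\lambda}\tau(m)$ versus the $a^\lambda=t^\lambda$ convention, and the identification of $\rho$ with $n$, must be tracked consistently so that the exponent in the kernel reads $2\lambda-n$ and not $\lambda-n$.
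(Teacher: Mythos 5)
Your proposal is correct and, usefully, it identifies both available strategies. The paper itself remarks that the intertwining relation ``is a consequence of the Knapp-Stein theory'' but then ``offers a direct proof.'' Your preferred ``cleaner route'' (invoke the general Knapp-Stein proposition, substitute $w\tau=\tau'$ and $w\lambda=-\lambda$) is precisely the argument the paper acknowledges but declines to carry out, presumably to keep the exposition self-contained and illustrate the Vahlen--Maass--Ahlfors machinery at work. What the paper actually writes out is your ``direct route'': it expands $\pi'_{-\lambda}(g)J_\lambda f(x)$ (rather than $J_\lambda \pi_\lambda(g)f$ --- same computation, opposite side), performs the change of variables $y=\gamma(z)$ with $\gamma=\begin{pmatrix}a^*&-c^*\\-b^*&d^*\end{pmatrix}$ and Jacobian $\vert d^*-b^*z\vert^{-2n}$, and simplifies the kernel via the global covariance formula \eqref{globalcov} plus the identity $\tau({u^*}^{-1})=\tau'(u)$ for $\vert u\vert=1$. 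One small correction to your sketch: in the direct computation what is used is \eqref{globalcov} for the kernel and the conformal factor $\vert cx+d\vert^{-2}$ only through the change-of-variables Jacobian; the rotation-factor decomposition of $Dg(x)$ in Proposition~\ref{localcov} is not invoked separately. You are also right that the paper does not actually prove the convergence/boundedness clause of the statement --- only the intertwining identity is established in the proof, with the analytic assertions implicitly delegated to the Knapp--Stein framework --- so your discussion of local integrability near $y=0$ for $\Re\lambda>0$ goes a bit beyond what the paper supplies.
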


Although this is a consequence of the Knapp-Stein theory, we may offer a direct proof (compare with \cite{gm}).

\begin{proof} Let $g=\begin{pmatrix}a&b\\c&d\end{pmatrix}$. Let $\gamma=\begin{pmatrix} a^*&-c^*\\-b^*&d^*\end{pmatrix}$ which is easily seen to be an element of $\widetilde G$.

Let $f$ be in $\mathcal C^\infty_c(E^n, \Sigma)$. Then 
\[J_\lambda f (x) = \int_{E^n} \vert x-y\vert^{2\lambda-n} \tau\Big(\frac{x-y}{\vert x-y\vert}\Big)f(y)\,dy\ ,
\]
so that
\[\pi'_{-\lambda} (g)J_\lambda f(x)  = \dots\]\[= \vert d^*-b^*x\vert^{2\lambda-n}\tau'\Big(\frac{d^*-b^*x}{\vert d^*-b^*x}\Big)^{-1}\int_{E^n}\vert \gamma(x)-y\vert^{2\lambda-n}\tau\Big(\frac{\gamma(x)-y}{\vert \gamma(x)-y\vert}\Big) f(y)dy\ .
\]
Use the change of variable $y=\gamma(z)$, and hence $dy = \vert d^*-b^*z\vert^{-2n} dz$ to get
\[\vert d^*-b^*x\vert^{-2\lambda-n} \tau'\Big(\frac{d^*-b^*x)}{\vert d^*-b^*x\vert}\Big)^{-1} \int_{E^n}\vert \gamma(x)-\gamma(z)\vert^{2\lambda-n} \tau\Big(\frac{\gamma(x)-\gamma(z)}{\vert \gamma(x)-\gamma(z)}\Big) \vert d^*-b^*z\vert^{-2n} f\big(\gamma(z)\big)dz\ .
\]
Now, using \eqref{globalcov}

\[\gamma(x)-\gamma(z) = {(d^*-b^*x)^*}^{-1}(x-z)(d^*-b^*z)^{-1}
\]
\[ \vert \gamma(x)-\gamma(z)\vert = \vert d^*-b^*x\vert^{-1} \vert x-z\vert \vert d^*z-b^*z\vert^{-1} \]
\[ \frac{\gamma(x)-\gamma(z)}{\vert\gamma(x)-\gamma(z) \vert} ={\Big( \frac{d^*-b^*x)}{\vert d^*-b^*x\vert}\Big)^*}^{-1} \frac{x-z}{\vert x-z\vert}\Big(\frac{d^*-b^*z}{\vert d^*-b^*z}\Big)^{-1}\ .
\]
For $u\in \Gamma_n$ such that $\vert u \vert =1$, ${\tau(u^*}^{-1})= \tau(u') = \tau'(u)$ so that
\[\pi'_{-\lambda}(g)J_\lambda f(x) \]\[= \int_{E^n} \vert x-z\vert^{2\lambda-n}\vert d^*-b^*z\vert^{-2\lambda-n} \tau\Big(\frac{d^*-b^*z}{\vert d^*-b^*z\vert}\Big)^{-1}f\big((a^*z-c^*)(d^*-b^*z)^{-1}\big) dz
\]
\[ = J_\lambda \pi_\lambda(g)f(x)\ .
\]
\end{proof}

 For $s\in \mathbb C$, let, for $x\in E^n, x\neq 0$
 \[d_s(x) = \vert x\vert^{s-1} \sum_{j=1}^n x_jE_j\ ,
 \]
 and let $D_s$ be the associated convolution operator  defined by
 \[D_sf(x) = \int_{E^n} d_s(y) f(x-y) dy\ .
 \]
 
 \begin{proposition} Let $\Re s>-n$. For any $g\in \widetilde G$,
 \begin{equation}\label{dsintw}
 D_s\circ \pi'_{\frac{s+n}{2}}(g) = \pi'_{-(\frac{s+n}{2})}(g)\circ D_s
 \end{equation}
 
 \end{proposition}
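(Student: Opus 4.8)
The plan is to identify $D_s$, up to a harmless pointwise twist, with the Knapp--Stein operator $J_\lambda$ of the previous theorem for the right value of $\lambda$, and then deduce \eqref{dsintw} by combining the intertwining relation \eqref{intw} for $J_\lambda$ with the elementary relation \eqref{E1intw} between $\pi_\lambda$ and $\pi'_\lambda$. Concretely, the claim will be that, with $\lambda=\tfrac{s+n}{2}$,
\[
D_s=J_{\lambda}\circ M_{E_1},
\]
where $M_{E_1}$ denotes the pointwise left multiplication operator $f\mapsto E_1 f$. Since $E_1=\sigma(e_1)$ is unitary on $\Sigma$ (it is skew-Hermitian with $E_1^2=-\Id$), $M_{E_1}$ is bounded on every $L^2_\mu(E^n,\Sigma)$, so this is a genuine factorization on the relevant Hilbert spaces.

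For the factorization, the only computation needed is the vector identity $\sum_{j=1}^{n} x_j E_j=\tau(x)\,E_1$ for $x=x_1+x_2 e_2+\dots+x_n e_n\in E^n$; this is exactly what is read off from the computation in the proof of \eqref{tautau'}, using $\tau(x)=x_1\Id+x_2E_1E_2+\dots+x_nE_1E_n$ together with $E_1E_jE_1=E_j$ for $j\ge 2$ and $E_1^2=-\Id$. Since $\tau$ is linear, $\tau(x)=|x|\,\tau(x/|x|)$, whence
\[
d_s(x)=|x|^{s-1}\sum_{j=1}^n x_j E_j=|x|^{s}\,\tau\!\Big(\tfrac{x}{|x|}\Big)\,E_1 .
\]
Comparing with the kernel $|y|^{2\lambda-n}\tau(y/|y|)$ of $J_\lambda$ and choosing $\lambda=\tfrac{s+n}{2}$ (so that $2\lambda-n=s$) gives $D_s f(x)=\int_{E^n}|y|^{s}\tau(y/|y|)\,E_1 f(x-y)\,dy=J_\lambda(E_1 f)(x)$, i.e. the displayed factorization. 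The hypothesis $\Re s>-n$ is precisely $\Re\lambda>0$, which by the previous theorem is what makes the integral defining $J_\lambda$, hence $D_s$, absolutely convergent on $L^2_\lambda(E^n,\Sigma)$.

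With this in hand, \eqref{dsintw} is a three-step chain: for $g\in\widetilde G$ and $\lambda=\tfrac{s+n}{2}$,
\[
D_s\,\pi'_{\lambda}(g)=J_{\lambda}\,M_{E_1}\,\pi'_{\lambda}(g)=J_{\lambda}\,\pi_{\lambda}(g)\,M_{E_1}=\pi'_{-\lambda}(g)\,J_{\lambda}\,M_{E_1}=\pi'_{-\lambda}(g)\,D_s ,
\]
where the second equality is \eqref{E1intw} read as $M_{E_1}\pi'_\lambda(g)=\pi_\lambda(g)M_{E_1}$ and the third is \eqref{intw}; since $-\lambda=-\tfrac{s+n}{2}$, this is the assertion. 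I do not anticipate a real obstacle here: the content is the identification $D_s=J_{(s+n)/2}\circ M_{E_1}$, and the only point requiring care is keeping the spectral parameters matched --- that $M_{E_1}$ carries $\pi'_\lambda$ to $\pi_\lambda$ (not to $\pi_{-\lambda}$) and that $J_\lambda$ then lands in $\pi'_{-\lambda}$, so that the composite intertwines $\pi'_{\frac{s+n}{2}}$ with $\pi'_{-(\frac{s+n}{2})}$ as stated. One can work on $\mathcal C^\infty_c(E^n,\Sigma)$ throughout and invoke density, exactly as in the direct proof of the theorem on $J_\lambda$.
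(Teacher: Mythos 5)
Your proof is correct and takes essentially the same approach as the paper: both identify $D_s$ with $J_{(s+n)/2}\circ M_{E_1}$ via the kernel identity $d_s(x)=\vert x\vert^{s}\tau(x/\vert x\vert)E_1$ and then chain \eqref{E1intw} with \eqref{intw}. You merely spell out the pointwise Clifford computation and the operator chain more explicitly than the paper does.
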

 
 \begin{proof}
 By using the trivial intertwining relations \eqref{tautau'} and \eqref{E1intw}, we can transform the intertwining relation \eqref{intw} to get

\[ J_\lambda E_1 \circ \pi'_{\lambda}(g) = \pi'_{-\lambda}(g)\circ J_\lambda E_1\ ,\]
for any $g\in \widetilde G$. Next, for any $x\in E^n, x\neq 0$
 \[\vert x\vert^{2\lambda-n-1}(x_1+x_2E_1E_2+\dots + x_nE_1E_n)E_1 = d_{2\lambda-n}(x)\ .\] 
 The statement follows, with $\lambda = \frac{s+n}{2}$.

 \end{proof}

\section {The fundamental identity and its consequences}

For $\Re s>-n$, $d_s$ is an integrable function on $E^n$ (with values in $\End(\Sigma)$), hence a (tempered) distribution. We want to meromorphically continue this distribution to $\mathbb C$.

Let $D$ be the Dirac operator on $E^n$. By definition, it acts on smooth functions on $E^n$ with values in $\Sigma$ by
\[ Df(x) = \sum_{j=1}^n E_j\frac{\partial f}{\partial x_j} (x)\ .\]

Extend this formula to $\End(\Sigma)$ valued function : if $S(x)$ is such a function, let $\displaystyle DR(x) = \sum_{j=1}^n E_j \frac{\partial R}{\partial x_j}(x)$. Notice that the associated convolution operator satisfies
\[D(R\star f) = DR\star f\ .
\]
In both cases,  $D^2 f= -\sum_{j=1}^n \frac{\partial^2f}{\partial x_j^2}$, which we write as $D^2 = \Delta$, where $\Delta$ is the (extension to $\Sigma$ or $End(\Sigma)$-valued functions of the) standard Laplacian on $E^n$.
\begin{proposition} [Fundamental identity] Let $s\in \mathbb C, \Re s > -n$. Then, for $x\in E^n, x\neq 0$
\begin{equation}\label{PBS}
D \vert x\vert^{s+1} = (s+1) d_s\ ,
\end{equation}
where both sides are $End(\Sigma)$-valued functions.
\end{proposition}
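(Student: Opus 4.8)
The identity is pointwise on $E^n\setminus\{0\}$, and I would prove it by a direct computation with the chain rule. The only ingredient is the elementary formula $\dfrac{\partial}{\partial x_j}\,\vert x\vert = \dfrac{x_j}{\vert x\vert}$ on $E^n\setminus\{0\}$, valid for each $1\le j\le n$. From it, for any exponent $\mu\in\mathbb C$ one gets $\dfrac{\partial}{\partial x_j}\,\vert x\vert^{\mu} = \mu\,\vert x\vert^{\mu-1}\,\dfrac{x_j}{\vert x\vert}\cdot\vert x\vert = \mu\,\vert x\vert^{\mu-2}x_j$; more precisely, writing $\vert x\vert^{\mu}=(\vert x\vert)^{\mu}$ and differentiating, $\dfrac{\partial}{\partial x_j}\vert x\vert^{\mu} = \mu\,\vert x\vert^{\mu-1}\cdot\dfrac{x_j}{\vert x\vert} = \mu\,\vert x\vert^{\mu-2}x_j$.

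Next I would specialize to $\mu=s+1$, so that $\dfrac{\partial}{\partial x_j}\vert x\vert^{s+1} = (s+1)\,\vert x\vert^{s-1}\,x_j$ for each $j$. Since $\vert x\vert^{s+1}$ is a scalar function, applying the Dirac operator means $D\,\vert x\vert^{s+1} = \sum_{j=1}^n E_j\,\dfrac{\partial}{\partial x_j}\vert x\vert^{s+1}$, where each $E_j=\sigma(e_j)\in\End(\Sigma)$ is a constant operator. Substituting the derivative computed above and pulling the scalar factors out past the $E_j$ gives
\[
D\,\vert x\vert^{s+1} \;=\; \sum_{j=1}^n E_j\,(s+1)\,\vert x\vert^{s-1}\,x_j \;=\; (s+1)\,\vert x\vert^{s-1}\sum_{j=1}^n x_j E_j \;=\; (s+1)\,d_s(x),
\]
by the very definition of $d_s$. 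This is exactly \eqref{PBS}.

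Finally I would add one remark to justify the hypothesis $\Re s>-n$: it is not needed for the pointwise statement on $E^n\setminus\{0\}$, but it guarantees that both $\vert x\vert^{s+1}$ (with $\Re(s+1)>-n+1$) and $d_s$ (with $d_s$ of order $\Re s -1 > -n-1$, locally integrable since $\Re(s-1)+1=\Re s>-n$... in fact $\vert x\vert^{s-1}$ is locally integrable near $0$ precisely when $\Re s>0$, but one works with the meromorphic continuation) define tempered distributions, so that the identity can be read distributionally and later differentiated/continued in $s$. There is no real obstacle here; the ``hard part'', such as it is, is merely bookkeeping with the constant operators $E_j$ and noting that $D$ acts on the scalar function $\vert x\vert^{s+1}$ coordinatewise, so that the whole computation reduces to the scalar chain rule.
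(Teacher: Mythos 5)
Your proof is correct and follows essentially the same route as the paper, which likewise reduces the identity to the scalar derivative formula $\frac{\partial}{\partial x_j}\vert x\vert^{s+1}=(s+1)\,x_j\,\vert x\vert^{s-1}$ and then sums against the constant operators $E_j$. One small slip in your tangential remark: in $E^n$ the function $\vert x\vert^{s-1}$ is locally integrable near $0$ for $\Re s>1-n$ (not $\Re s>0$), and $d_s$ itself is $O(\vert x\vert^{\Re s})$ near $0$, hence locally integrable exactly for $\Re s>-n$, which is the condition actually used.
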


\begin{proof} It amounts to the formula 
\[\frac{\partial}{\partial x_j} \vert x\vert^{s+1} =(s+1)\,x_j\,\vert x \vert^{s-1}\ .
\]
\end{proof}

As the meromorphic continuation of the distribution $\vert x\vert^s$ is well known (see \cite{gs}), equation \eqref{PBS} allows the meromorphic continuation of the distribution $d_s$.

\begin{proposition} The distribution $\vert x\vert^s$ can be continued meromorphically to $\mathbb C$, with simple poles at $s=-n-2k$, for $k\in \mathbb N$. The residue at $s=-n-2k$ is given by
\[Res\,(\vert x\vert^s, -n-2k) = c_k \Delta^k \delta\ ,
\]
where $\displaystyle c_k= \frac{2\pi^{\frac{n}{2}}}{\Gamma(\frac{n}{2})}\, \frac{1}{2^k \,k!\,n(n+2)\dots(n+2k-2)}\ $.

\end{proposition}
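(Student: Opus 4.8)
The proof I would give is essentially the classical computation of the meromorphic continuation of the Riesz distribution $|x|^s$, which is well documented (as the paper itself notes, see \cite{gs}), so the task is really to organize that computation cleanly rather than to discover anything new.

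\medskip

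\textbf{Approach.} I would first fix a test function $\varphi \in \mathcal C^\infty_c(E^n)$ and write the pairing $\langle |x|^s, \varphi\rangle = \int_{E^n} |x|^s \varphi(x)\,dx$, which is holomorphic for $\Re s > -n$ since $|x|^s$ is locally integrable there. The plan is to split the integral into $\int_{|x|\le 1}$ and $\int_{|x|\ge 1}$; the outer piece is entire in $s$, so all poles come from the inner piece. On $|x|\le 1$ I would pass to polar coordinates $x = r\omega$, $dx = r^{n-1}\,dr\,d\omega$, giving
\[
\int_{|x|\le 1} |x|^s \varphi(x)\,dx = \int_0^1 r^{s+n-1}\Big(\int_{S^{n-1}} \varphi(r\omega)\,d\omega\Big)\,dr
= \int_0^1 r^{s+n-1}\, \Phi(r)\,dr,
\]
where $\Phi(r) = \int_{S^{n-1}}\varphi(r\omega)\,d\omega$ is smooth and even in $r$, hence has a Taylor expansion $\Phi(r) = \sum_{k=0}^{N} \Phi^{(2k)}(0)\,r^{2k}/(2k)! + O(r^{2N+2})$. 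Substituting term by term, the integral $\int_0^1 r^{s+n-1+2k}\,dr = 1/(s+n+2k)$ produces the simple poles at $s = -n-2k$, and the residue at $s=-n-2k$ equals $\Phi^{(2k)}(0)/(2k)!$ against the normalization from the remaining (entire) terms.

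\medskip

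\textbf{Identifying the residue.} The remaining step is to recognize $\Phi^{(2k)}(0)/(2k)!$ as a constant multiple of $\langle \Delta^k\delta,\varphi\rangle = \Delta^k\varphi(0)$. This is the standard spherical-mean identity: if $\Phi(r) = \int_{S^{n-1}}\varphi(r\omega)\,d\omega$ then $\Phi(r) = \omega_{n-1}\,M_\varphi(r)$ with $M_\varphi$ the spherical mean, and the Pizzetti formula gives $\Phi^{(2k)}(0)/(2k)! = \dfrac{\omega_{n-1}}{2^k k!\, n(n+2)\cdots(n+2k-2)}\,\Delta^k\varphi(0)$, where $\omega_{n-1} = 2\pi^{n/2}/\Gamma(n/2)$ is the surface area of $S^{n-1}$. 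I would prove this by applying $\Delta$ to $\varphi$, using that the radial part of $\Delta$ in polar coordinates is $\partial_r^2 + \tfrac{n-1}{r}\partial_r$ (the spherical term integrates to zero over $S^{n-1}$), and matching Taylor coefficients; equivalently one expands $\varphi$ around $0$, integrates the homogeneous pieces over the sphere (odd-degree terms vanish, and $\int_{S^{n-1}}\omega_{i_1}\cdots\omega_{i_{2k}}\,d\omega$ reduces to a multiple of a sum of products of Kronecker deltas whose total contraction against $\partial^{2k}\varphi(0)$ is $\Delta^k\varphi(0)$ times the stated combinatorial factor). Collecting the constants gives exactly $c_k = \dfrac{2\pi^{n/2}}{\Gamma(n/2)}\cdot\dfrac{1}{2^k\, k!\, n(n+2)\cdots(n+2k-2)}$.

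\medskip

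\textbf{Main obstacle.} The genuine meromorphic continuation and the location/simplicity of the poles is routine once the polar-coordinate split is in place; the only real bookkeeping is the precise value of the residue constant, i.e. establishing the Pizzetti-type identity $\int_{S^{n-1}}(\text{degree-}2k\text{ monomial in }\omega)\,d\omega$ and carrying the factor $n(n+2)\cdots(n+2k-2)$ through correctly. So I expect the combinatorics of integrating monomials over the sphere (or, equivalently, iterating the radial Laplacian and matching Taylor coefficients) to be where care is needed; everything else is the standard argument for $|x|^s$ recorded in \cite{gs}.
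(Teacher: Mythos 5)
Your proof is correct, and the computation of the constant $c_k$ via Pizzetti's formula checks out (one can verify it against the test case $\varphi=|x|^{2k}$, using $\Delta^k|x|^{2k}=2^{2k}k!\,\Gamma(n/2+k)/\Gamma(n/2)$, together with $\omega_{n-1}=2\pi^{n/2}/\Gamma(n/2)$). Note, however, that the paper gives no proof of this proposition at all: it states it as a well-known fact and refers the reader to Gelfand--Shilov \cite{gs}, then immediately uses it together with the fundamental identity to continue $d_s$. So there is no argument in the paper to compare against; your proposal simply supplies the classical Gelfand--Shilov argument (split at $|x|=1$, polar coordinates, even Taylor expansion of the spherical integral, Pizzetti) that the authors are implicitly invoking. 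Your own summary of where care is needed --- the combinatorics of integrating even monomials over $S^{n-1}$ to get the factor $n(n+2)\cdots(n+2k-2)$ --- is exactly right, and the rest is routine.
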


\begin{proposition} The distribution $d_s$ can be meromorphically continued to $\mathbb C$ with simple poles at $s=-n-2k-1, k\in \mathbb N$. The residue of $d_s$ at $-n-2k-1$ is given by
\[Res\,(d_s, -n-1-2k) = c_k \, D^{2k+1}\delta\ .
\]

\end{proposition}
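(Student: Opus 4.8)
The plan is to deduce the meromorphic continuation and the residues of $d_s$ directly from the Fundamental Identity \eqref{PBS} together with the known continuation of $\vert x\vert^s$ from the preceding proposition. Rewrite \eqref{PBS} as
\[
d_s = \frac{1}{s+1}\, D\vert x\vert^{s+1}\ ,
\]
valid initially for $\Re s > -n$ as an identity of $\End(\Sigma)$-valued distributions. The right-hand side makes sense as a meromorphic family: $s\mapsto \vert x\vert^{s+1}$ continues meromorphically to $\mathbb C$ with simple poles exactly at $s+1 = -n-2k$, i.e. $s = -n-1-2k$ for $k\in\mathbb N$; applying the fixed differential operator $D$ preserves meromorphy and does not create new poles; and the scalar factor $1/(s+1)$ contributes at most a pole at $s=-1$. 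So the only candidate poles of $d_s$ are $s=-1$ and $s=-n-1-2k$, $k\in\mathbb N$.

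First I would dispose of the spurious pole at $s=-1$ (which occurs only when $n=1$, where $-n-1-2k$ with $k=0$ also gives $s=-1$; for $n\geq 2$ the value $s=-1$ lies in the region $\Re s>-n$ where $d_s$ is manifestly holomorphic, being given by a locally integrable function, so there is nothing to check). Then the main computation is the residue at $s=-n-1-2k$. Using $Res(\vert x\vert^{s+1}, -n-2k)$ — read off from the previous proposition with the shift $s\rightsquigarrow s+1$ — I get
\[
Res(d_s, -n-1-2k) = \frac{1}{(-n-1-2k)+1}\, D\big(c_k\,\Delta^k\delta\big) = \frac{1}{-n-2k}\,c_k\, D^{2k+1}\delta\ ,
\]
since $D\Delta^k = D\,(D^2)^k = D^{2k+1}$ by the relation $D^2=\Delta$ established before the Fundamental Identity. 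This already exhibits the residue as a constant multiple of $D^{2k+1}\delta$; the remaining task is purely bookkeeping, namely to check that $c_k/(-n-2k)$ equals the constant $c_k$ named in the statement, i.e. to verify that the stated normalization absorbs the factor $1/(-n-2k)$. Comparing with the formula $c_k = \frac{2\pi^{n/2}}{\Gamma(n/2)}\cdot\frac{1}{2^k k!\, n(n+2)\cdots(n+2k-2)}$ one sees that the denominator of the $d_s$-residue constant is $2^k k!\, n(n+2)\cdots(n+2k)$ (one more factor, $n+2k$), which up to sign is exactly $-(n+2k)$ times the $\vert x\vert^s$-denominator — so the two "$c_k$" in the two propositions are in fact different constants related by this factor, and the statement is using the same symbol $c_k$ for the new one.

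The only genuine subtlety, and the step I expect to require the most care, is the interchange "$Res\circ D = D\circ Res$" for a meromorphic family of distributions: one must argue that if $T_s$ is meromorphic in $s$ with a simple pole at $s_0$ and residue $R$, then $DT_s$ is meromorphic with simple pole at $s_0$ and residue $DR$. This is immediate from the Laurent expansion $T_s = \frac{R}{s-s_0} + (\text{holomorphic})$ tested against a fixed $\mathcal C_c^\infty$ function, since $D$ acts by the transpose on test functions and commutes with the scalar $\frac{1}{s-s_0}$; I would state it in one line. Everything else — the identification of poles, the vanishing of the $\vert x\vert^{s+1}$ residues away from $s+1\in -n-2\mathbb N$, and the $D^2=\Delta$ substitution — is routine given the results already in the excerpt.
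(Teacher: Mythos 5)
Your proof follows essentially the same route as the paper: both deduce the continuation and residues of $d_s$ from the identity $d_s=\frac{1}{s+1}D|x|^{s+1}$ combined with the known poles and residues of $|x|^s$, and both use $D\Delta^k=D^{2k+1}$ at the end. The paper pairs with a test function $f$ and integrates by parts, getting $\langle d_s,f\rangle=\frac{1}{s+1}\int |x|^{s+1}Df(x)\,dx$, whereas you stay at the level of $\End(\Sigma)$-valued distributions and commute $D$ past the residue; these are the same argument phrased two ways, and the "Res commutes with $D$'' point you isolate at the end is exactly the triviality that the paper's integration-by-parts formulation sidesteps.

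The one genuinely valuable thing your version adds is the careful bookkeeping of the constant, and here you have in fact caught a slip. When the paper passes from the residue of $\int|x|^{s+1}Df(x)\,dx$ at $s+1=-n-2k$ (which is $c_k\Delta^k(Df)(0)$) to the residue of $\frac{1}{s+1}\int|x|^{s+1}Df(x)\,dx$ at $s=-n-1-2k$, it silently drops the evaluation of the prefactor $\frac{1}{s+1}$ at the pole, namely $\frac{1}{-(n+2k)}$. As you observe, the correct residue is $\frac{c_k}{-(n+2k)}D^{2k+1}\delta$, with $c_k$ as in the preceding proposition; the statement as printed reuses the symbol $c_k$ without this extra factor, so either the constant is in error or the symbol has been silently redefined. (There is also a sign to track through the integration by parts, which the paper waves at with "the $E_j$'s are skew-Hermitian''; your $\frac{1}{-(n+2k)}$ discrepancy survives any reasonable resolution of that sign.) You are also right to dispose explicitly of the apparent pole at $s=-1$ coming from the prefactor, a point the paper does not address; your observation that for $n\geq 2$ the value $s=-1$ lies in the region of absolute convergence, and for $n=1$ it coincides with the $k=0$ pole, settles it cleanly.
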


\begin{proof} Let $f$ be a function in $\mathcal C^\infty_c(E^n,\Sigma)$. Use \eqref{PBS}, recall that the $E_j$'s are skew Hermitian and integrate by part to get

\[\int_{E^n} d_s(x) f(x) dx = \frac{1}{s+1} \int_{E^n} \vert x\vert^{s+1} Df(x)dx\ .
\]
This identity is valid {\it a priori} for $\Re s>-n$. The right hand side can be extended to a meromorphic function, with poles at $s+1 = -n-2k, k\in \mathbb N$. This serves to \emph{define} the left hand side. At $s=-n-2k-1$, the residue of the right hand side is 
$c_k\, \Delta^k(Df)(0)$. But $D^2 = \Delta$, so that $\Delta^k = D^{2k}$,  hence the proposition follows.

\end{proof}

\begin{theorem} For any positive integer $k$, and any $g\in \widetilde G$,
\begin{equation}\label{Dintw}
 D^{2k+1} \circ \pi'_{-k-\frac{1}{2}}(g) = \pi_{k+\frac{1}{2}}'(g)\circ D^{2k+1}
\end{equation}
\end{theorem}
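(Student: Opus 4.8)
The plan is to obtain \eqref{Dintw} as the residue of the intertwining relation \eqref{dsintw} at the appropriate value of $s$. Recall that Proposition \ref{dsintw} (displayed as \eqref{dsintw}) asserts, for $\Re s > -n$ and all $g\in\widetilde G$,
\[
 D_s\circ \pi'_{\frac{s+n}{2}}(g) = \pi'_{-\frac{s+n}{2}}(g)\circ D_s\ ,
\]
where $D_s$ is the convolution operator with kernel $d_s$. By the last proposition above, the distribution $d_s$ extends meromorphically in $s$ to all of $\mathbb C$, with a simple pole at $s = -n-1-2k$ and residue
\[
 \operatorname{Res}(d_s,\,-n-1-2k) = c_k\, D^{2k+1}\delta\ .
\]
Consequently the operator-valued family $s\mapsto D_s$ extends meromorphically, with a simple pole at $s=-n-1-2k$ whose residue is (a nonzero constant $c_k$ times) the differential operator $D^{2k+1}$ — convolution by $D^{2k+1}\delta$ being exactly $D^{2k+1}$.

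**Carrying out the residue.** First I would fix $k\ge 1$ and note that at $s_0 = -n-1-2k$ the parameter $\lambda = \frac{s+n}{2}$ specializes to $\frac{s_0+n}{2} = -\frac12 - k = -(k+\tfrac12)$, and $-\lambda$ specializes to $k+\tfrac12$. So the target of the residued identity should be precisely $D^{2k+1}\circ \pi'_{-(k+\frac12)}(g) = \pi'_{k+\frac12}(g)\circ D^{2k+1}$, which is the claimed \eqref{Dintw}. Then I would justify taking residues on both sides of \eqref{dsintw}: for a fixed $g$, both sides are families of operators depending holomorphically on $s$ away from the poles, the factors $\pi'_{\frac{s+n}{2}}(g)$ and $\pi'_{-\frac{s+n}{2}}(g)$ are entire in $s$ (the formula in Theorem for $\pi_\lambda$, transported to $\pi'_\lambda$, is polynomial-times-power in $\lambda$, hence holomorphic), and $D_s$ has a simple pole at $s_0$. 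Multiplying an entire operator-valued function by one with a simple pole and extracting the Laurent coefficient of $(s-s_0)^{-1}$ is linear and commutes with composition by the entire factors; so the residue of the left side is $\bigl(\operatorname{Res}_{s_0}D_s\bigr)\circ \pi'_{-(k+\frac12)}(g) = c_k\,D^{2k+1}\circ\pi'_{-(k+\frac12)}(g)$, and similarly for the right. Cancelling the nonzero constant $c_k$ yields \eqref{Dintw}.

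**Main obstacle.** The only real subtlety is the analytic bookkeeping: making precise in what topology the meromorphic continuation of $D_s$ takes place and on which space of functions the identity \eqref{dsintw} persists after continuation. I would handle this by testing against $f\in\mathcal C^\infty_c(E^n,\Sigma)$ and pairing with a further test function, reducing everything to scalar meromorphic identities in $s$ (the continuation of $\vert x\vert^s$ as a tempered distribution, as already used in the proof of the preceding proposition); then both sides of \eqref{dsintw} become, for each such test pair, meromorphic scalar functions of $s$ agreeing on $\Re s > -n$, hence everywhere, and their residues at $s_0$ agree. The identity $\eqref{Dintw}$ between differential operators then holds on $\mathcal C^\infty_c(E^n,\Sigma)$, which is the natural statement. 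No Clifford-analytic input is needed beyond what has already been set up.
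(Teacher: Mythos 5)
Your proposal is essentially the paper's own argument: meromorphically continue the identity \eqref{dsintw} in $s$, take the residue at $s_0=-n-1-2k$, and use that the residue of $D_s$ there is $c_k\,D^{2k+1}$ (with the $\pi'$-factors entire, so residue extraction commutes with composition). The paper phrases this tersely as ``pass to the limit on both sides'' at the pole, but that is exactly the residue extraction you spell out; your version is a bit more explicit about why the bookkeeping is legitimate, with no substantive difference in approach.
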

\begin{proof} Recall the intertwining relation \eqref{dsintw}. It is clearly valid for $s$ in $\mathbb C$, provided $s$ is not a pole. But at a pole, say $s=-n-2k-1$, we may pass to the limit on both sides of \eqref{intw}, thus obtaining \eqref{Dintw}.

\end{proof}

\section { The compact picture}

The results above may also be realized in the compact picture, i.e. on the sphere, where the induced representation,
the Knapp-Stein operators and their residues may also be computed. The main result is
an explicit expression of the residues as a polynomial in the Dirac operator of the sphere. 
Recall from \cite{kn} that our induced representation may also be realized in the compact picture, see Chapter VII,
(7.3a), and the intertwining operator given as in \cite{kn} (7.37) as follows:
Set, for $f$ a function on $K$ and $x\in K$
\[J_{\tau,\lambda,w} f(x)= \int_K e^{(-\rho+\lambda) \log a(w^{-1}k)} \tau(m(w^{-1}k)) f(xk) dk 
\ .\]

Now we can repeat the arguments from Euclidian space and realize the Knapp-Stein operator as
a kernel operator, acting on sections of the spin bundle over $S^n$, and we may find the
residues of this meromorphic family. For this is it convenient to calculate the spectrum of
the Knapp-Stein operator, i.e. its eigenvalues on the $K$-types in the induced representation.
Recall the method of spectrum-generating \cite{boo}, which we can apply in an elementary way to obtain
the $K$-spectrum as in the following result.

When $n$ is odd, the spin representation of $\widetilde M$ has highest weight $(\frac{1}{2}, \dots , \frac{1}{2})$ (we use the standard choices of a Cartan subalgebra of $\mathfrak m$ and a basis inside). The induced representation space (sections of the spin bundle over $S^n$) decomposes under the action of $\widetilde K$ without multiplicity, and the corresponding highest weights of the $\widetilde K$-types are $(j,\pm) =(j, \frac{1}{2}, \dots ,\frac{1}{2}, \pm \frac{1}{2})$ with $j \in \mathbb N + \frac{1}{2}$. 

When $n$ is even, the spin representation of $\widetilde M$ is a sum of two representations, say $\sigma^+$ and $\sigma^-$, with respective highest weights $(\frac{1}{2},\dots, \frac{1}{2}, \frac{1}{2})$ and $(\frac{1}{2},\dots, \frac{1}{2}, -\frac{1}{2})$. Each induced representation ($(\pi^+_\lambda, \mathcal S^+_\lambda)$ from $\sigma_+$, $(\pi^-_\lambda, \mathcal S^-_\lambda)$  from $\sigma_-$) decomposes under the action of $\widetilde K$ without multiplicity, and the corresponding heighest weights of the $\widetilde K$-types are $(j, \frac{1}{2},\dots,\frac{1}{2})$ with $j\in \mathbb N +\frac{1}{2}$.

\begin{proposition} Define the spectral functions as in \cite{boo} by
$$Z_{j,\pm}(\lambda) = \pm \frac{\Gamma(\frac{n}{2} + j -\lambda)}{\Gamma(\frac{n}{2} + j +\lambda)}$$
When $n$ is odd, the operator acting on the $(j, \pm)$ $\widetilde K$-type by the scalar $Z_{j,\pm}(\lambda)$
is an intertwining operator between $\pi_{\lambda}$ and $\pi_{-\lambda}$.

\noindent
When $n$ is even, the operator acting from $\mathcal S^\pm_\lambda$ into $\mathcal S^\mp_\lambda$ on the $\widetilde K$-type $(j, \frac{1}{2},\dots, \frac{1}{2})$ by the scalar $Z_{j,\pm}(\lambda)$ is an intertwining operator between $\pi_\lambda= \pi_\lambda^+ \oplus \pi_\lambda^-$
 and $\pi_{-\lambda} = \pi_{-\lambda}^+ \oplus \pi_{-\lambda}^-$.
\end{proposition}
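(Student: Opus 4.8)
# Proof Proposal

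The plan is to identify the Knapp-Stein operator $J_\lambda$ with the family of operators described by the spectral functions $Z_{j,\pm}(\lambda)$, using the spectrum-generating technique of \cite{boo}. The key observation is that an intertwining operator between $\pi_\lambda$ and $\pi_{-\lambda}$ (which are irreducible for generic $\lambda$, and whose $\widetilde K$-decomposition is multiplicity-free) is, by Schur's lemma, determined up to scalar on each $\widetilde K$-type; so the whole problem reduces to computing the \emph{ratio} of the scalars on adjacent $\widetilde K$-types. First I would exhibit a single $\widetilde K$-invariant first-order differential operator $L$ on the spin bundle (a suitably normalized Dirac operator on $S^n$, transported from the Clifford-algebra data $E_1,\dots,E_n$) that \emph{shifts} the $\widetilde K$-parameter $j$ by $\pm 1$, i.e.\ maps the $(j,\pm)$-type to a combination of neighboring types. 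The point of the spectrum-generating method is that $L$ does not commute with $\pi_\lambda$, but satisfies a clean commutation relation of the form $L\,\pi_\lambda(g) - \pi_{\lambda'}(g)\,L = (\text{zeroth order operator built from the conformal vector fields})$, where $\lambda'$ differs from $\lambda$ in a controlled way.

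Next I would combine this with the intertwining property of the candidate operator. Call $T_\lambda$ the operator acting by $Z_{j,\pm}(\lambda)$ on the $(j,\pm)$-type. One checks directly from the definition of $Z$ that $T_\lambda$ satisfies a functional/recursion identity relating its eigenvalue on the $(j,\pm)$-type to its eigenvalue on the $(j\pm1,\cdot)$-type, precisely matching the recursion that the operator $L$ forces on \emph{any} intertwiner. Concretely, the Gamma-function quotient $\Gamma(\tfrac n2+j-\lambda)/\Gamma(\tfrac n2+j+\lambda)$ has the property that replacing $j$ by $j+1$ multiplies it by $(\tfrac n2+j-\lambda)/(\tfrac n2+j+\lambda)$, and this rational factor is exactly the ratio dictated by comparing the action of $L$ in the $\pi_\lambda$ and $\pi_{-\lambda}$ pictures. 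Since both $J_\lambda$ and $T_\lambda$ are intertwiners from $\pi_\lambda$ to $\pi_{-\lambda}$ and both are $\widetilde K$-scalar on each type, their eigenvalues obey the same recursion; matching them on a single base $\widetilde K$-type (the minimal one, $j=\tfrac12$) then identifies the two families up to an overall constant independent of $j$ (but possibly depending on $\lambda$), which can be pinned down by an explicit computation of $J_\lambda$ on the lowest $\widetilde K$-type or by a known normalization. The sign $\pm$ in $Z_{j,\pm}$ accounts for the fact that, for $n$ odd, the two $\widetilde K$-types $(j,+)$ and $(j,-)$ are exchanged appropriately, and for $n$ even, that $L$ interchanges $\mathcal S^+_\lambda$ and $\mathcal S^-_\lambda$; this is where the parity case distinction enters, but it is bookkeeping once the scalar recursion is in hand.

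The main obstacle is the first step: producing the shift operator $L$ and establishing its precise commutation relation with the $\pi_\lambda$-action, including getting all the constants right. This requires writing down the conformal Killing vector fields on $S^n$ coming from $\mathfrak g$ in the spin-bundle trivialization, computing the action of the Dirac operator of the sphere on the $\widetilde K$-types (its eigenvalues are classical, of the form $\pm(j+\tfrac{n-1}{2})$ up to normalization, which is exactly what produces the argument $\tfrac n2+j$ in $Z$), and verifying that the failure of $L$ to intertwine is genuinely zeroth-order with the stated coefficient. Once that relation is available, the rest is the Schur-lemma/recursion argument sketched above, which is routine. I would also note that the meromorphic family $T_\lambda$ is manifestly defined for all $\lambda$ (the Gamma quotient is meromorphic), so the identification extends from the region $\Re\lambda>0$ where $J_\lambda$ converges to all of $\mathbb C$ by analytic continuation.
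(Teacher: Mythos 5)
The paper does not actually prove this proposition: it simply cites \cite{boo}, noting only that the parameter $r$ there corresponds to $-\lambda$ here, so what you need to reproduce is the spectrum-generating argument from that reference. Your overall scaffolding (multiplicity-free $\widetilde K$-decomposition, Schur's lemma forcing any intertwiner to be scalar on each $\widetilde K$-type, a recursion that pins down the ratio of scalars on adjacent types, and then a one-point normalization) is the right framework.

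There is, however, a genuine gap in the central step. You propose to take for the ``shift operator'' $L$ ``a $\widetilde K$-invariant first-order differential operator on the spin bundle (a suitably normalized Dirac operator on $S^n$) that shifts the $\widetilde K$-parameter $j$ by $\pm 1$.'' This is self-contradictory: a $\widetilde K$-invariant operator on a multiplicity-free $\widetilde K$-module acts, by Schur's lemma, as a \emph{scalar} on each $\widetilde K$-type; it cannot shift types. Indeed the Dirac operator $\mathbb D$ on $S^n$ commutes with the $\widetilde K$-action and its eigenvalues $\pm(\tfrac n2+k)$ on the $\widetilde K$-types are exactly the scalars that appear later in the paper (Proposition on the spectrum of $\mathbb D$) --- so $\mathbb D$ is diagonal, not a ladder. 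The operator that actually drives the recursion in \cite{boo} is of a different nature: it is a zeroth-order ``$\mathfrak p$-type'' operator (multiplication by the matrix entries of the vector-valued cocycle / coordinate functions on $S^n$, the part of $d\pi_\lambda$ coming from $\mathfrak g/\mathfrak k$), which transforms nontrivially under $\widetilde K$ and therefore genuinely maps the $j$-th $\widetilde K$-type into the $(j\pm1)$-th ones. One then uses the algebraic identity expressing $d\pi_\lambda(Y)$, $Y\in\mathfrak p$, as a $\lambda$-independent vector field plus $\lambda$ times that multiplication operator; comparing this in the $\pi_\lambda$ and $\pi_{-\lambda}$ pictures via the intertwining relation is what yields the rational ratio $(\tfrac n2+j-\lambda)/(\tfrac n2+j+\lambda)$. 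Your recursion argument and Gamma-quotient check would then go through, but as written the proposal rests on an operator that cannot perform the shift you ask of it.
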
 

See \cite{boo}, noticing that the parameter $r$ there coincides with $-\lambda$ in our present context. Because of the generic uniqueness of the intertwining operator between $\pi_{\lambda}$ and $\pi_{-\lambda}$, the  intertwining operator thus constructed is a multiple (by some meromorphic function of $\lambda$) of the one we use in the first part. The poles of the former were at $\lambda = -\frac{1}{2} -k, k\in \mathbb Z$. They now correspond to non singular values of the spectral functions, so that the residues are replaced by true values. The normalization is in fact such that the value of the intertwining operator at $\lambda = -\frac{1}{2}$ is exactly the Dirac operator on $S^n$. More precisely, for $\lambda=-\frac{1}{2}$, we obtain the spectrum of the Dirac operator on $S^n$.

\begin{proposition} The spectrum of the Dirac operator $\mathbb D$ is given by
\[Z_{k+\frac{1}{2},\, \pm} (-\frac{1}{2})= \pm (\frac{n}{2}+k)\ .
\]

\end{proposition}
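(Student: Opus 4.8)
The plan is to reduce the proposition to the functional equation of the Gamma function, using as input the normalization recorded just above (that the intertwining operator specializes at $\lambda=-\tfrac12$ to the Dirac operator $\mathbb D$ on $S^n$) together with the preceding proposition, which identifies the scalar by which this operator acts on the $\widetilde K$-type of highest weight $(j,\pm)=(j,\tfrac12,\dots,\tfrac12,\pm\tfrac12)$ as $Z_{j,\pm}(\lambda)$. Granting these, the eigenvalue of $\mathbb D$ on that $\widetilde K$-type is $Z_{j,\pm}(-\tfrac12)$, so the only thing left is to evaluate this number.

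First I would perform the substitution $j=k+\tfrac12$ (with $k\in\mathbb N$) and $\lambda=-\tfrac12$ in the definition of the spectral function:
\[
Z_{k+\frac12,\pm}\!\left(-\tfrac12\right)
=\pm\,\frac{\Gamma\!\left(\tfrac n2+k+\tfrac12+\tfrac12\right)}{\Gamma\!\left(\tfrac n2+k+\tfrac12-\tfrac12\right)}
=\pm\,\frac{\Gamma\!\left(\tfrac n2+k+1\right)}{\Gamma\!\left(\tfrac n2+k\right)}
=\pm\left(\tfrac n2+k\right),
\]
the last step being $\Gamma(z+1)=z\,\Gamma(z)$ with $z=\tfrac n2+k>0$. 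Two remarks are then in order. The argument $\tfrac n2+k$ is strictly positive, so no pole of $\Gamma$ is met and the value is genuine, consistent with the earlier observation that the poles at $\lambda=-\tfrac12-k$ of the noncompact family correspond here to regular values after normalization. Moreover, running $k$ over $\mathbb N$, the numbers $\pm(\tfrac n2+k)$ are precisely the classical eigenvalues of the Dirac operator on the round sphere $S^n$ (compare \cite{b}), with the $(j,\pm)$-types supplying the correct multiplicities; this cross-check confirms simultaneously the displayed formula and the normalization asserted above.

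The step I expect to be the main obstacle is not the Gamma-function bookkeeping at all, but pinning down that normalization, i.e.\ verifying that the spectrum-generating construction of \cite{boo} produces $\mathbb D$ itself rather than a nonzero scalar multiple of it. This has to be settled separately: one computes the action on a single, explicitly chosen $\widetilde K$-type — most conveniently the bottom one $j=\tfrac12$, where $\mathbb D$ is known to act by $\pm\tfrac n2$ — and matches it against $Z_{\frac12,\pm}(-\tfrac12)=\pm\tfrac n2$. Once the overall constant is fixed in this way, the general formula follows from the elementary computation above with no further analysis.
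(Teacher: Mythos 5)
Your computation is correct and is precisely what the paper's statement rests on: substituting $j=k+\tfrac12$, $\lambda=-\tfrac12$ into $Z_{j,\pm}(\lambda)=\pm\Gamma(\tfrac n2+j-\lambda)/\Gamma(\tfrac n2+j+\lambda)$ and applying $\Gamma(z+1)=z\Gamma(z)$. The paper does not spell out this one-line evaluation, and it simply asserts the normalization (that the operator obtained at $\lambda=-\tfrac12$ is $\mathbb D$ itself, not a multiple); your suggestion to pin that down by matching the bottom $\widetilde K$-type against the known eigenvalue $\pm\tfrac n2$ is a sensible and harmless elaboration. One small slip: the reference for the classical Dirac spectrum on $S^n$ should be \cite{bo} or \cite{ch} rather than \cite{b} (Bojarski), which is about something else.
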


\noindent
{\bf Remark}. An alternative determination of the spectrum of the Dirac operator on $S^n$ was given in \cite{bo}, using a more complicated argument which however is only using conformal geometry). See also \cite{ch}.
\medskip

For the other poles $\lambda = -\frac{1}{2}-m, m\in \mathbb N$, the computation of the values of the spectral functions and the previous result yield the following result.
\begin{theorem} Let $m\in \mathbb N$. The differential operator

\[\mathbb D_m = \mathbb D(\mathbb D^2-1)(\mathbb D^2-4)\dots (\mathbb D^2-m^2)
\]
is covariant with respect to $(\pi_{-\frac{1}{2}-m},\pi_{\frac{1}{2}+m})$. 

\end{theorem}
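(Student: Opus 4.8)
The plan is to carry out everything in the compact picture and to identify $\mathbb D_m$ with the value at $\lambda=-\tfrac12-m$ of the spectrally normalized Knapp--Stein operator. Denote by $\mathcal A_\lambda$ the operator which, on the $\widetilde K$-type $(j,\pm)$ (and, when $n$ is even, from $\mathcal S^\pm_\lambda$ into $\mathcal S^\mp_\lambda$ on the $\widetilde K$-type $(j,\tfrac12,\dots,\tfrac12)$), acts by the scalar $Z_{j,\pm}(\lambda)=\pm\Gamma(\tfrac n2+j-\lambda)/\Gamma(\tfrac n2+j+\lambda)$. By the Proposition above $\mathcal A_\lambda$ intertwines $\pi_\lambda$ and $\pi_{-\lambda}$; and since the poles of the unnormalized family sit at regular points of the functions $Z_{j,\pm}$, the operator $\mathcal A_{-\frac12-m}$ is well defined and intertwines $\pi_{-\frac12-m}$ and $\pi_{\frac12+m}$.

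First I would do the elementary $\Gamma$-function computation. Taking $j=k+\tfrac12$ with $k\in\mathbb N$ and $\lambda=-\tfrac12-m$,
\[
Z_{k+\frac12,\pm}\!\left(-\tfrac12-m\right)=\pm\,\frac{\Gamma(\tfrac n2+k+1+m)}{\Gamma(\tfrac n2+k-m)}
=\pm\Bigl(\tfrac n2+k\Bigr)\prod_{i=1}^{m}\Bigl[\bigl(\tfrac n2+k\bigr)^2-i^2\Bigr]\,,
\]
since the ratio of $\Gamma$-values equals the product of the $2m+1$ consecutive numbers $\tfrac n2+k-m,\dots,\tfrac n2+k+m$, which group in pairs $(\tfrac n2+k-i)(\tfrac n2+k+i)=(\tfrac n2+k)^2-i^2$ around the central factor $\tfrac n2+k$. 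On the other side, the Proposition on the Dirac spectrum gives that $\mathbb D$ acts on the $(k+\tfrac12,\pm)$-type by $\pm(\tfrac n2+k)$, hence $\mathbb D^2$ by $(\tfrac n2+k)^2$ and $\mathbb D^2-i^2$ by $(\tfrac n2+k)^2-i^2$; therefore $\mathbb D_m=\mathbb D(\mathbb D^2-1)\cdots(\mathbb D^2-m^2)$ acts on that same type precisely by $\pm(\tfrac n2+k)\prod_{i=1}^{m}[(\tfrac n2+k)^2-i^2]$. Thus $\mathbb D_m$ and $\mathcal A_{-\frac12-m}$ have, up to a normalization constant independent of the $\widetilde K$-type, the same scalar on every $\widetilde K$-type (in the even-dimensional case one must additionally match the way $\mathbb D$ exchanges the two half-spin bundles with the $\mathcal S^\pm\leftrightarrow\mathcal S^\mp$ structure of $\mathcal A_\lambda$, which accounts for the sign $\pm$ in $Z_{j,\pm}$).

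It remains to conclude that two operators with the same $\widetilde K$-spectrum coincide up to this constant. For this I would use that $\mathbb D_m$, being a constant-coefficient polynomial in the Dirac operator of the round sphere, commutes with the action of $\widetilde K\simeq\mathrm{Spin}(n+1)$, and that the $\widetilde K$-decomposition of the induced representation is multiplicity-free; Schur's lemma then forces $\mathbb D_m$ to be a nonzero multiple of $\mathcal A_{-\frac12-m}$. Since a nonzero scalar multiple of an intertwining operator is again an intertwining operator, $\mathbb D_m$ intertwines $\pi_{-\frac12-m}$ and $\pi_{\frac12+m}$, which is the claimed covariance. The only genuinely delicate step is this identification: one must know a priori that the residual value $\mathcal A_{-\frac12-m}$ of the (otherwise nonlocal) Knapp--Stein family is realized by a differential operator, and the multiplicity-one argument above is exactly what supplies this, modulo the separate bookkeeping for even versus odd $n$ already alluded to in the introduction.
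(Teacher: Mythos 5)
Your proposal follows essentially the same route as the paper: compute the spectral function $Z_{k+\frac12,\pm}(-\frac12-m)$ as the ratio of Gamma values, recognize the product of $2m+1$ consecutive numbers centered at $\frac n2+k$, and match this against the spectrum of the polynomial $\mathbb D_m$ in the Dirac operator (the paper writes this product as $(\mathbb D+m)(\mathbb D+m-1)\cdots(\mathbb D-m)$, which is the same polynomial as your $\mathbb D(\mathbb D^2-1)\cdots(\mathbb D^2-m^2)$). The paper's proof is terser and leaves implicit the multiplicity-free/Schur argument you spell out; making it explicit is a mild improvement in rigor, but the substance and strategy are identical.
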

\begin{proof} For $j= \frac{1}{2}+ k$,
\[Z_{k+\frac{1}{2},\, \pm}(-\frac{1}{2} -m)=\pm \,(\frac{n}{2} +k+m)(\frac{n}{2} +m-1)\dots(\frac{n}{2}+k-m)
\]
which coincides with the spectral function of the operator \[\mathbb D_m =(\mathbb D+m)(\mathbb D+m-1)\dots (\mathbb D-m)\ .\] Hence the statement.
\end{proof}
 
\medskip 

For other approaches to the covariance properties of powers of the Dirac operator on $S^n$, see \cite{es}, \cite{lr}.

\medskip
\footnotesize{\noindent Address\\ Jean-Louis Clerc, Institut Elie Cartan, Universit\'e de Lorraine, 54506 Vand\oe uvre-l\`es-Nancy, France\\ Bent \O rsted, Matematisk Institut, Byg.\,430, Ny Munkegade, 8000 Aarhus C,
Denmark.\\}
\medskip

\noindent \texttt{{jean-louis.clerc@univ-lorraine.fr, orsted@imf.au.dk
}}

\end{document}